\documentclass{article}
\usepackage{anyfontsize}
\usepackage[margin=1.25in]{geometry}
\usepackage{hyperref}
\usepackage{times,mathrsfs,mathtools,amsmath,amssymb}
\makeatletter

\makeatother
\usepackage{amsthm}
\usepackage{stix}
\usepackage{titlesec}

\usepackage{parskip}

\begin{document}

\newtheorem{theorem}{\textsc{Theorem}}[section]
\newtheorem{problem}[theorem]{\textsc{Problem}}
\newtheorem{exercise}{\textsc{Exercise}}[section]
\newtheorem{proposition}[theorem]{\textsc{Proposition}}
\newtheorem{lemma}[theorem]{\textsc{Lemma}}
\newtheorem{corollary}[theorem]{\textsc{Corollary}}
\newtheorem{definition}[theorem]{\textsc{Definition}}
\newtheorem{remark}[theorem]{\rm \textsc{Remark}}
\newtheorem{example}[theorem]{\rm \textsc{Example}}

\renewcommand{\qedsymbol}{$\blacksquare$}

\newcommand{\bb}{\mathbb}
\newcommand{\mc}{\mathcal}
\renewcommand{\bf}{\mathbf}
\renewcommand{\bar}{\overline}
\renewcommand{\Re}{\text{Re}\,}
\renewcommand{\Im}{\text{Im}\,}
\newcommand{\im}{\text{im}\,}
\newcommand{\wtilde}{\widetilde}
\newcommand{\what}{\widehat}
\newcommand{\rhu}{\rightharpoonup}
\newcommand{\la}{\langle}
\newcommand{\ra}{\rangle}
\renewcommand{\r}{\right}
\renewcommand{\l}{\left}
\newcommand{\ind}{\text{ind}}
\newcommand{\res}{\text{Res}}
\newcommand{\bs}{\boldsymbol}
\newcommand{\tx}{\text}
\renewcommand{\v}{\tx{\bf v}}
\renewcommand{\u}{\tx{\bf u}}
\newcommand{\n}{\tx{\bf n}}
\newcommand{\w}{\tx{\bf w}}
\renewcommand{\div}{\text{div}\,}
\newcommand{\bdot}{\bs{\cdot}}
\newcommand{\on}{\operatorname}
\newcommand{\hooklongrightarrow}{\lhook\joinrel\longrightarrow}

\setcounter{section}{0}

\title{Asymptotics for the level set equation near a maximum}
\author{Nick Strehlke}
\date{}

\maketitle

\begin{abstract}
We give asymptotics for the level set equation for mean curvature flow on a convex domain near the point where it attains a maximum. It is known that solutions are not necessarily $C^3,$ and we recover this result and construct non-smooth solutions which are $C^3.$ We also construct solutions having prescribed behavior near the maximum. We do this by analyzing the asymptotics for rescaled mean curvature flow converging to a stationary sphere.
\end{abstract}

\section{Introduction}

Let $\Omega$ be a smooth bounded mean-convex domain in $\mathbb R^{n+1}.$ The level set equation on $\Omega$ is a degenerate elliptic boundary value problem asking for a function $t\colon \Omega \to\mathbb R$ with $t = 0$ on the boundary $\partial\Omega$ and 
\begin{align}
	|\nabla t| \div{\left(\frac{\nabla t}{|\nabla t|}\right)} = -1. \label{lse}
\end{align}
This problem is known to admit a unique, twice-differentiable solution that satisfies (\ref{lse}) in the classical sense away from critical points.  Away from critical points, the equation is non-degenerate elliptic and the solution is smooth. The second derivative, however, may in general be discontinuous at critical points.

If $t$ solves (\ref{lse}) for a mean convex domain, then the level sets $M_\tau = \{x\in \Omega\colon t(x) = \tau\}$ form a mean curvature flow starting from $M_0 = \partial\Omega,$ that is, the position vector $X(\tau)$ of $M_\tau$ satisfies
\begin{align*}
	N\bdot \frac{\mathrm d X}{\mathrm d\tau} &= - H,
\end{align*}
where $N$ is the outer unit normal for $M_\tau$ at the point $X$ and $H = \div_{M_\tau} N$ is the scalar mean curvature. Mean-convexity (meaning that the mean curvature $H$ of the boundary $\partial\Omega$ is nonnegative) is the condition required to ensure that the surfaces making up the mean curvature flow are disjoint. If $x\in \Omega,$ the value $t(x)$ is therefore the time at which the mean curvature flow starting from $M_0 = \partial \Omega$ arrives at the point $x.$ For this reason, the function $t$ is sometimes called the \emph{arrival time} for mean curvature flow.

If $\Omega$ is a bounded convex domain, it was proved by Huisken in \cite{Hu84} that the mean curvature flow $\{M_\tau\}$ starting from $\partial\Omega$ contracts smoothly to a single point $x_0\in \Omega$ at some finite time $T.$ Moreover, the translated and rescaled flow $(T-\tau)^{-1/2}(M_\tau-x_0)$ converges at time $T$ to the round sphere $\bf{S}^n$ of radius $(2n)^{1/2}$ centered at the origin. The function $t$ solving (\ref{lse}) for $\Omega$ therefore has a single critical point $x_0$ inside $\Omega,$ where $t(x_0) = T$ is the maximum for $t.$ In this case, $t$ is actually $C^2$ on $\Omega$ and the second derivative $\nabla^2 t(x_0)$ of $t$ at this critical point is the identity: $\partial_i\partial_j t = \delta_{ij}.$\footnote{See Theorem 6.1 of \cite{Hu93}.}

In the case of a general mean-convex domain, the arrival time $t$ is known to be twice differentiable but not necessarily $C^2,$ see \cite{CoMi16}, \cite{CoMi17}, and \cite{CoMi18}. In fact, it was shown in \cite{Wh00} (Theorem 1.2) and \cite{Wh11} that any tangent flow of a smooth mean convex mean curvature flow is a generalized cylinder. From this one can figure out what the Hessian of the arrival time function must be if it exists. The remaining issue was to show that the Hessian exists, which is equivalent to the problem of uniqueness of tangent flows. This was solved in \cite{CoMi15}. The study of the arrival time is referred to as the \emph{level set method} in the mean curvature flow literature, because it gives a means of rigorously extending mean curvature flow beyond singularities. This point of view was first taken in a computational context by Osher and Sethian, \cite{OsSe88}, and the theory was then developed in \cite{ChGi91}, \cite{EvSp91}, \cite{EvSp92}, \cite{EvSp92a}, and \cite{EvSp95}. We will restrict attention to the case in which the domain of the arrival time function is convex.

In \cite{KoSe06}, Robert Kohn and Sylvia Serfaty proved that the solution to equation (\ref{lse}) on a convex planar domain $\Omega$ is always $C^3,$ and they asked whether this is true in higher dimensions. Natasa Sesum demonstrated in \cite{Se08} that the answer is negative: if $n\geq 2,$ there exists a convex domain $\Omega\subset \mathbb R^{n+1}$ for which the solution $t$ to (\ref{lse}) is not three times differentiable. To prove this, she analyzed the rate of convergence of a rescaled MCF $(T-\tau)^{-1/2}M_\tau,$ proving the existence of solutions for which this rescaled flow converges to the sphere like $(T-\tau)^{1/n}.$ 

We recover this result and extend it by describing all possible rates of convergence for rescaled MCF over the sphere. As a result, we are able to describe the first terms of all possible Taylor expansions of a solution $t$ to equation (\ref{lse}) on a convex domain $\Omega$ at the point where $t$ attains its maximum. We also construct solutions which have the prescribed asymptotics, but we do not prove here that they are actually Taylor expansions (we do not show that the solutions are better than $C^2$ on the domain $\Omega$). The main result is the following theorem.

\bigskip
\begin{theorem}
\label{arrival-time}
Let $t$ be a solution to the level set equation (\ref{lse}) on a smooth bounded convex domain $\Omega\subset\mathbb R^{n+1}$ which attains its maximum $T$ at the the origin. Then either $\Omega$ is a round ball and $t = T-|x|^2/(2n)$ for $x\in \Omega,$ or there exists an integer $k\geq 2$ and a nonzero homogeneous harmonic polynomial $P$ of degree $k$ for which $t$ has, at the origin, the asymptotic expansion
	\begin{align}
		t(x) &= T- \frac{|x|^2}{2n} + |x|^{k(k-1)/n}P(x) + O\left(|x|^{\sigma + k + k(k-1)/n}\right) \label{taylor}
	\end{align}
	for some $\sigma>0.$ Moreover, if $P$ is a homogeneous harmonic polynomial of degree $k$ there exists a smooth bounded convex domain $\Omega \subset\mathbb R^{n+1}$ for which the corresponding arrival time $t$ satisfies (\ref{taylor}) near the origin where it attains its maximum $T.$
\end{theorem}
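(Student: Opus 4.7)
The plan is to work in the rescaled picture. Translate so that $t$ attains its maximum at the origin, set $\Sigma_s = e^{s/2} M_{T-e^{-s}},$ and invoke Huisken's theorem to get smooth convergence $\Sigma_s \to \sqrt{2n}\,\mathbf{S}^n.$ Writing $\Sigma_s$ as a normal graph $r = \sqrt{2n} + u(\omega, s)$ over the limit sphere, $u$ satisfies a parabolic equation
\begin{align*}
    \partial_s u = Lu + Q(u, \nabla u, \nabla^2 u),
\end{align*}
where $L = \frac{1}{2n}\Delta_{\mathbf{S}^n} + 1$ is the Jacobi operator of the shrinker sphere and $Q$ is at least quadratic. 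The spectrum of $L$ on degree-$k$ spherical harmonics is $\mu_k = 1 - k(k+n-1)/(2n);$ the modes $k=0,1$ are unstable but are killed by our choice of blow-up time $T$ and blow-up point at the origin, and $\mu_k < 0$ for every $k\geq 2.$

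For the classification, a Merle--Zaag-type ODE argument applied mode-by-mode to $u$ produces a dichotomy: either $u \equiv 0$ for all large $s,$ so that $\Sigma_s$ is itself the round sphere, $M_\tau$ is a shrinking sphere, and $\Omega$ is a round ball with $t = T - |x|^2/(2n);$ or there is a least integer $k\geq 2$ for which the degree-$k$ projection of $u$ is nontrivial, and then
\begin{align*}
    u(\omega, s) = e^{\mu_k s} Y_k(\omega) + O(e^{(\mu_k - \sigma) s})
\end{align*}
for some nonzero spherical harmonic $Y_k$ and some $\sigma > 0$ (the spectral gap $\mu_k - \mu_{k+1} = (2k+n)/(2n)$ supplies a definite $\sigma$). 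To convert this back to an expansion of $t,$ along the level set $\{t = \tau\}$ we have $|x| = \sqrt{T-\tau}\,(\sqrt{2n} + u(\omega,-\log(T-\tau))),$ and writing $Y_k(\omega) = P(x)/|x|^k$ for the harmonic extension $P$ of degree $k,$ squaring and inverting to first order in $T-\tau$ yields $t(x) = T - |x|^2/(2n) + c\,|x|^{2-2\mu_k-k}P(x) + \ldots.$ A short bookkeeping calculation shows $2 - 2\mu_k - k = k(k-1)/n,$ matching (\ref{taylor}).

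For the existence half, given a harmonic polynomial $P$ of degree $k\geq 2,$ I would construct a solution $u(\omega, s)$ of the full rescaled flow equation on $[s_0,\infty)$ with prescribed leading asymptotic proportional to $e^{\mu_k s}P(\omega)/|\omega|^k.$ Because all modes of degree at least two are stable, this is a stable-manifold construction: one solves forward in $s$ starting from data a small multiple of the corresponding eigenfunction and shows, via a contraction-mapping argument in an exponentially weighted space, that the remaining stable modes are subdominant. For $s_0$ sufficiently large, the surface $\Sigma_{s_0}$ is a $C^\infty$-small graph over the sphere and hence smooth and strictly convex; setting $T = e^{-s_0}$ and $\partial\Omega = \sqrt{T}\,\Sigma_{s_0}$ produces the required smooth bounded convex domain whose arrival time has the expansion (\ref{taylor}) at the origin by the preceding computation.

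The main analytic obstacle is the mode-decomposition step in the classification: extracting the clean $e^{\mu_k s} Y_k$ asymptotic from $u$ requires quantitative control of the quadratic error $Q$ over a long time interval, which is where a Merle--Zaag trichotomy (or an equivalent Lojasiewicz--Simon inequality at the shrinker) enters. The parallel difficulty on the existence side lies in ensuring that the leading term of the constructed $u$ is of pure degree $k$ and not dominated by some lower-degree stable mode; once the correct mode is prescribed, convexity of $\partial\Omega$ follows automatically from closeness to the sphere.
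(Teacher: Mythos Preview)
Your approach is essentially the paper's: pass to the rescaled flow, write $\Sigma_s$ as a normal graph $u$ over the limit sphere, analyze the linearization, and convert back. The exponent bookkeeping and the conversion from $u$-asymptotics to $t$-asymptotics match the paper's computation, and the paper's asymptotic analysis (projecting $u$ onto eigenspaces and deriving ODE inequalities for the projections) is indeed of Merle--Zaag type.

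There is, however, one genuine gap. A Merle--Zaag argument does not by itself yield the dichotomy ``$u\equiv 0$ or $u$ has an identified exponential leading term''; what it yields is ``$u$ decays at some definite exponential rate with an identified leading eigenfunction, or $u$ decays faster than every exponential.'' Upgrading the second alternative to $u\equiv 0$---hence $\Omega$ a round ball---is a parabolic unique-continuation statement for rescaled MCF at the sphere, which the paper imports from a separate companion paper and which neither a Merle--Zaag argument nor a \L ojasiewicz--Simon inequality supplies. You should name this step explicitly. Two smaller remarks: the spectral gap is not always what determines $\sigma$, since the quadratic nonlinearity contributes an error of size $e^{-2\lambda_k s}$ that can dominate the next linear mode (this is what happens for $k=2$, $n\geq 3$); and on the existence side, your concern about lower-degree stable modes $E_2,\dots,E_{k-1}$ overtaking the prescribed $E_k$ mode is exactly right---the paper resolves it not by starting from data in $E_k$ alone and flowing forward, but by working on the codimension-$d_k$ invariant manifold of solutions decaying at rate $\lambda_k$ (on which those slower modes are suppressed by construction) and then showing that the map from $E_k$-initial-data to the leading eigenfunction is close to the identity, hence surjective onto a neighborhood of the origin in $E_k$.
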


\emph{Remark.} Part of the statement of the theorem is a unique continuation result for the arrival time on a convex domain: if the arrival time attains its maximum at the origin and coincides to infinite order there with the arrival time $T-|x|^2/(2n)$ for a ball, then in fact the domain is a ball and the arrival time is identically equal to $T-|x|^2/(2n).$ This is proved in a companion paper, \cite{Str18}, as a consequence of the fact that a rescaled mean curvature flow cannot converge to a sphere faster than any exponential unless it is identically equal to the sphere.\footnote{A different and more complicated parabolic unique continuation property for self-shrinkers was recently proved by Jacob Bernstein in \cite{Be17}.}

As will be seen in Section \ref{as-arr}, Theorem \ref{arrival-time} follows straightforwardly from Theorems \ref{stable-mfld} and \ref{prescribed-lin} on the possible rates of convergence for rescaled mean curvature flow. As a consequence, the statement of the asymptotic expansion (\ref{taylor}) in Theorem \ref{arrival-time} may be sharpened in keeping with the slightly more complicated statement of Theorem \ref{stable-mfld}. The most precise statement is: Let $\lambda_j = j(j+n-1)/(2n)-1$ be the $j$th eigenvalue for the operator $\Delta+1$ on the sphere $\bf{S}^n$ of radius $(2n)^{1/2}$ centered at the origin. For $j\geq k$ such that $\lambda_j<2\lambda_k,$ there exists a homogeneous harmonic polynomial $P_j$ of degree $j$ such that
\begin{align*}
	t(x) &= T- \frac{|x|^2}{2n} + \sum_{\genfrac{}{}{0pt}{2}{j\geq k}{j + j(j-1)/n<2k + 2k(k-1)/n}}|x|^{j(j-1)/n}P_j(x) + O\left(|x|^{2\sigma}\right)
\end{align*}
for all $\sigma < k + k(k-1)/n.$ Notice that $j+j(j-1)/n = 2+2\lambda_j.$

In particular, when $k\geq 3$ or $n=1$ or $2,$ the exponent $\sigma$ appearing in (\ref{taylor}) can be taken equal to $1.$ If $k=2$ and $n\geq 3,$ then we can choose any $\sigma <2/n.$

\medskip
We do not prove in this paper that the asymptotic expansion (\ref{taylor}) of Theorem \ref{arrival-time} is actually a Taylor expansion, though of course it is true that the Taylor expansion at the origin must coincide with (\ref{taylor}) \emph{if it exists}. Proving existence requires bounding the derivative of the arrival time in a neighborhood of the origin, an analysis we do not carry out here. It follows, however, from results of Huisken and Sesum,\footnote{See Theorem 6.1 of \cite{Hu93} for Huisken's proof that the arrival time is $C^2$ and Corollary 5.1 of \cite{Se08} for Sesum's proof that the arrival time is $C^3$ in case $k\geq 3$ in our Theorem \ref{arrival-time}.} that the arrival time for a convex domain is $C^2$ in all cases and that, in case $k\geq 3$ in our Theorem \ref{arrival-time}, the arrival time is $C^3.$

\medskip
In the following section, we introduce the rescaled mean curvature flow and describe the relationship between rates of convergence for rescaled MCF and the Taylor expansion of the arrival time near its maximum.

\section{Rate of convergence of MCF and relation to level set equation}
\label{as-arr}

We begin by recalling the rescaled mean curvature flow. Let $\Omega$ be a convex domain and let $\{M_\tau\}_{\tau\in [0,T)}$ be the mean curvature flow starting from $M_0 = \partial\Omega.$ As mentioned in the introduction, $M_\tau$ shrinks smoothly down to a point $x_0\in \Omega$ as $\tau\to T$ in such a way that the rescaled surfaces $(T-\tau)^{-1/2}(M_\tau - x_0)$ converge in $C^k,$ for any $k,$ to the sphere $\bf{S}^n$ of radius $(2n)^{1/2}$ centered at the origin in $\mathbb R^{n+1}.$ 

It is natural therefore to study the surfaces $(T-\tau)^{-1/2} (M_\tau - x_0),$ and the analysis is simplified by a change of variable: we put $s = -\log{(T-\tau)}$ and for $s\geq -\log{T}$ define $\Sigma_s =  (T-\tau)^{-1/2}(M_\tau - x_0) = e^{s/2} (M_{T-e^{-s}} - x_0).$ The $1$-parameter family $\{\Sigma_s\}$ is called a \emph{rescaled mean curvature flow}. Its position vector $X(s)$ satisfies the equation
\begin{align*}
	\frac{\mathrm d X}{\mathrm ds} \bdot N = -H + \frac{1}{2}X\bdot N,
\end{align*}
with $N$ and $H$ now the outer unit normal and scalar mean curvature of $\Sigma_s.$ The sphere $\bf{S}^n$ of radius $(2n)^{1/2}$ centered at the origin is stationary under the rescaled mean curvature flow.\footnote{Surfaces that are stationary for this flow are in general called \emph{self-shrinkers}, because they shrink homothetically under mean curvature flow. It was shown in \cite{Br16} that the sphere $\bf{S}^n$ is the only compact embedded self-shrinker with genus zero.}

Let $\bf{n}$ be the outer unit normal to the sphere $\bf{S}^n.$  If $\{\Sigma_s\}$ is a convex rescaled mean curvature flow, then it converges as $s\to\infty$ to $\bf{S}^n$ in $C^2.$ This means that there exists $s_0\in \mathbb R$ and a scalar function $u\colon \bf{S}^n\times[s_0,\infty)\to \mathbb R$ with the property that $\Sigma_s$ is the normal graph of $u(\cdot,s)$ over the sphere $\bf{S}^n$ for $s\geq s_0$:
\begin{align*}
	\Sigma_s = \left\{y + u(y,s)\bf{n}(y)\colon y\in \mathbf{S}^n\right\}.
\end{align*}
The function $u$ is uniquely determined and solves a quasilinear parabolic equation
\begin{align}
	\partial_s u &= \Delta u + u + N(u,\nabla u,\nabla^2u) \label{rMCF}
\end{align}
where $\Delta$ is the Laplacian on $\bf{S}^n$ and $N$ is a nonlinear term of the following form:
\begin{align*}
	N(u,\nabla u,\nabla^2 u) = f(u,\nabla u) + \on{trace}(B(u,\nabla u)\nabla^2u),
\end{align*}
where $f$ and $B$ are smooth and $f(0,0),$ $df(0,0),$ and $B(0,0)$ are zero.

We now state our results on the rate of convergence for rescaled mean curvature for a sphere. Our first main result is that a solution to the equation (\ref{rMCF}) that converges to zero as $s\to\infty$ approaches a solution to the linear equation $\partial_s u = \Delta u + u.$ 

The linear operator $\Delta + 1$ has discrete spectrum with eigenvalues $\lambda_k = k(k+n-1)/(2n)-1,$ for $k=0,1,2,\dots,$ each corresponding to an eigenspace $E_k$ of finite dimension $\binom{n+k}{n}-\binom{n+k-2}{n}.$ Notice that zero is not an eigenvalue. Let $d_k$ be the dimension of the space of eigenfunctions corresponding to eigenvalues $\lambda_0,\dots,\lambda_{k-1}$ which are strictly smaller than $\lambda_k.$

With this notation, the precise result is the following.

\medskip
\begin{theorem}
\label{stable-mfld}
	For any integer $r>n/2+1$ and any integer $k\geq 2,$ there exists an open neighborhood $B = B(k,r)$ of the origin in $H^r(\bf{S}^n)$ with the property that the set of initial data $u_0\in B$ for which the solution $u$ to the rescaled MCF equation (\ref{rMCF}) exists for all time $s\geq 0$ and converges to zero with exponential rate $\lambda_k$ is a codimension $d_k$ submanifold of $B$ which is invariant for equation (\ref{rMCF}). For such initial data, there exist, for $j\geq k$ with $\lambda_j<2\lambda_k,$ eigenfunctions $P_j \in E_j$ for which the corresponding solution $u$ satisfies
	\begin{align*}
		\big\| u(y,s) - \sum_{\genfrac{}{}{0pt}{2}{j\geq k}{\lambda_j<2\lambda_k}} e^{-\lambda_j s} P_j(y)\big\|_{H^r(\mathbf{S}^n)} \leq C e^{-2\sigma s}
	\end{align*}
	for some constant $C>0$ and all $\sigma<\lambda_k.$
\end{theorem}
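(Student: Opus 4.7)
Rewrite \eqref{rMCF} as $\partial_s u = Lu + N(u)$ with $L = \Delta + 1$ self-adjoint on $L^2(\mathbf{S}^n),$ spectrum $\{-\lambda_j\}_{j\geq 0},$ and eigenspaces $E_j.$ Since $r > n/2+1,$ $H^r(\mathbf{S}^n)$ is a Banach algebra embedded in $C^1,$ and the hypotheses $f(0,0) = df(0,0) = 0,$ $B(0,0) = 0$ imply $N : H^r \to H^{r-2}$ is smooth with $N(0) = DN(0) = 0$ and $\|N(u)\|_{H^{r-2}} = O(\|u\|_{H^r}^2).$ My approach is a Lyapunov--Perron construction of the strong stable manifold for exponential rate $\lambda_k,$ followed by a mode-by-mode Duhamel calculation extracting the refined expansion.

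Split $H^r = E_< \oplus E_\geq$ with $E_< = \bigoplus_{j<k} E_j$ of dimension $d_k$ and projections $\pi_<,\pi_\geq.$ The semigroup satisfies $\|e^{sL}\pi_<\|_{H^r\to H^r} \leq C e^{-\lambda_{k-1} s}$ for all $s\in\mathbb R,$ while parabolic smoothing on $\mathbf{S}^n$ gives $\|e^{sL}\pi_\geq\|_{H^{r-2}\to H^r} \leq C(1+s^{-1})e^{-\lambda_k s}$ for $s>0.$ Fix $\eta \in (\max(0,\lambda_{k-1}),\lambda_k)$ and introduce $X_\eta := \{u\in C([0,\infty); H^r) : \sup_{s\geq 0} e^{\eta s}\|u(s)\|_{H^r} < \infty\}.$ For each small $v\in E_\geq,$ look for a fixed point $u\in X_\eta$ of
\begin{align*}
\mathcal T_v[u](s) = e^{sL}v + \int_0^s e^{(s-\tau)L}\pi_\geq N(u(\tau))\,d\tau - \int_s^\infty e^{(s-\tau)L}\pi_< N(u(\tau))\,d\tau.
\end{align*}
Because $\|N(u(\tau))\|_{H^{r-2}} \lesssim \|u\|_{X_\eta}^2\, e^{-2\eta\tau}$ and $\lambda_{k-1}<\eta<\lambda_k,$ both integrals converge in the weighted norm and $\mathcal T_v$ is a contraction on a small ball in $X_\eta.$ The fixed point $u(v)$ depends smoothly on $v$ by the implicit function theorem, and its initial value yields a smooth map $\Phi : E_\geq \to E_<$ with $\Phi(0) = D\Phi(0) = 0;$ the graph $W_k = \{v+\Phi(v)\}$ is the desired codimension-$d_k$ invariant submanifold. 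Time-translation covariance of $\mathcal T_v$ gives invariance under \eqref{rMCF}; taking $\eta$ arbitrarily close to $\lambda_k$ and running a short Duhamel bootstrap (using $\|N(u)\|_{H^{r-2}} = O(e^{-2\eta s})$ once $u\in X_\eta$) upgrades the decay to the full rate $\lambda_k.$ A standard uniqueness argument then shows that any initial datum giving rate $\lambda_k$ lies on $W_k.$

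For the refined expansion, fix $u$ on $W_k.$ For each $j$ with $\lambda_j<2\lambda_k$---only finitely many---the projection $u_j := \pi_j u \in E_j$ satisfies the scalar ODE $\partial_s u_j + \lambda_j u_j = \pi_j N(u),$ whose source decays like $e^{-2\eta s}.$ Since $\lambda_j<2\eta$ for $\eta$ close enough to $\lambda_k,$ Duhamel rearranged over $[0,\infty)$ gives
\begin{align*}
u_j(s) = e^{-\lambda_j s}P_j - e^{-\lambda_j s}\int_s^\infty e^{\lambda_j\tau}\pi_j N(u(\tau))\,d\tau, \quad P_j := u_j(0) + \int_0^\infty e^{\lambda_j\tau}\pi_j N(u(\tau))\,d\tau,
\end{align*}
with $P_j\in E_j$ well-defined and the tail integral bounded by $C e^{-2\sigma s}$ for any $\sigma<\lambda_k.$ For the high-frequency remainder $\pi_{\geq K} u$ with $K$ the smallest index satisfying $\lambda_K\geq 2\lambda_k,$ Duhamel combined with $\|e^{sL}\pi_{\geq K}\|\leq C e^{-\lambda_K s}$ yields $\|\pi_{\geq K}u(s)\|_{H^r}=O(e^{-2\sigma s});$ any resonance $\lambda_K = 2\lambda_k$ contributes only a polynomial-in-$s$ factor absorbed by the strict inequality. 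Summing over modes yields the asserted estimate.

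\textbf{Main obstacle.} The principal technical difficulty is the quasilinear dependence of $N$ on $\nabla^2 u$: working in $H^r$ one loses two derivatives per application of $N,$ so both the contraction argument for $\mathcal T_v$ and the mode-by-mode Duhamel analysis must be carried out by pairing the exponential weights of $X_\eta$ with the parabolic smoothing bound on $e^{sL}\pi_\geq,$ whose $s^{-1}$ singularity near $\tau = s$ has to be reconciled with the weighted norms. A secondary issue is the resonance $\lambda_j = 2\lambda_k,$ which produces a polynomial-in-$s$ factor, absorbed by the strict inequality $\sigma<\lambda_k$ in the remainder bound.
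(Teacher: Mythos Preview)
Your overall architecture---a Lyapunov--Perron fixed point for the invariant manifold, followed by a mode-by-mode Duhamel computation for the $P_j$---is exactly the paper's. The difference is in how you close the contraction against the quasilinear loss, and as written there is a genuine gap.

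You estimate $N$ only in $H^{r-2}$ and plan to recover both lost derivatives via the smoothing bound $\|e^{sL}\pi_\geq\|_{H^{r-2}\to H^r}\leq C(1+s^{-1})e^{-\lambda_k s}$. But $(s-\tau)^{-1}$ is not integrable at $\tau=s$, so the forward Duhamel integral $\int_0^s e^{(s-\tau)L}\pi_\geq N(u(\tau))\,d\tau$ does not converge in $H^r$ from your hypotheses; exponential weights do nothing for this local singularity. You flag this as the ``main obstacle'' but do not say how to remove it, and the pure sup-weighted space $X_\eta$ you chose cannot absorb it.

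The paper handles this differently. It places $N$ in $H^{r-1}$ (one derivative lost, not two) via the asymmetric bilinear estimate
\[
\|N(v)-N(w)\|_{H^{r-1}} \lesssim \|v\|_{H^{r+1}}\|v-w\|_{H^r} + \|w\|_{H^r}\|v-w\|_{H^{r+1}},
\]
which requires one factor at regularity $H^{r+1}$. To supply that factor it augments the norm to
\[
\|v\|_{r,\sigma} = \Bigl(\int_0^\infty \|v(s)\|_{H^{r+1}}^2\,ds\Bigr)^{1/2} + \sup_{s\geq 0} e^{\sigma s}\|v(s)\|_{H^r},
\]
and it closes the fixed-point estimates not by semigroup smoothing but by the energy inequality
\[
\tfrac{1}{2}\tfrac{d}{ds}\|v\|_{H^r}^2 + (1-\varepsilon)\|v\|_{H^{r+1}}^2 \leq \tfrac{1}{4\varepsilon}\|(\partial_s-L)v\|_{H^{r-1}}^2,
\]
which simultaneously yields the weighted-sup bound and the $L^2_tH^{r+1}$ bound needed to feed back into the bilinear estimate. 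Your argument can be repaired by adopting this norm and energy step (or some equivalent device such as a maximal-regularity norm); the rest of your outline, including the extraction of the $P_j$ and the high-mode remainder, then goes through essentially as you wrote it and matches the paper's Section~4.
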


\emph{Remark.} The proof closely follows the proof of the analogous theorem for ODEs. Moreover, the proof of the existence of an invariant manifold is modeled on the argument of \cite{Na88} (which generalizes \cite{EpWe87}).

\medskip
We also prove that the leading eigenfunction $P_k$ to which $e^{\lambda_k s}u(x,s)$ converges in $H^r(\bf{S}^n)$ may be prescribed.

\medskip
\begin{theorem}
\label{prescribed-lin}
Suppose $k\geq 2$ and let $P \in E_k$ be an eigenfunction for the operator $\Delta+1$ on the sphere $\bf{S}^n$ corresponding to the eigenvalue $\lambda_k.$ There exists $s_0\geq0$ and $u\colon \mathbf{S}^n\times[s_0,\infty)\to\mathbb R$ which solves the rescaled MCF equation (\ref{rMCF}) and satisfies
\begin{align*}
	\|e^{\lambda_k s} u(y,s) - P(y)\|_{H^r(\bf{S}^n)} \leq Ce^{-\sigma s}
\end{align*}
for some constants $C>0$ and $\sigma>0$ and for all $s\geq s_0.$  
\end{theorem}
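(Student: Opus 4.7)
My approach is a contraction mapping argument. I would write $u = \phi + v$ with $\phi(y,s) := e^{-\lambda_k s} P(y)$, an exact solution of the linearization $\partial_s u = (\Delta+1) u$ (immediate since $P \in E_k$), and seek $v$ with strictly faster decay. Substituting into (\ref{rMCF}) and using that $\phi$ solves the linear equation yields
\begin{align*}
    \partial_s v - (\Delta + 1) v = \mc N(v, s), \qquad \mc N(v,s) := N\bigl(\phi + v,\, \nabla(\phi + v),\, \nabla^2(\phi + v)\bigr).
\end{align*}
Fixing $\alpha \in (\lambda_k, 2\lambda_k)$, I would work in the Banach space $X$ of continuous $H^r(\bf{S}^n)$-valued maps $v$ on $[s_0, \infty)$ with norm $\|v\|_X := \sup_{s \geq s_0} e^{\alpha s}\|v(s)\|_{H^r} < \infty$, and recast the problem as $v = \mc G \mc N(v, \cdot)$ for a mode-by-mode Duhamel inverse $\mc G$ of $\partial_s - (\Delta+1)$.

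Concretely, decomposing along the eigenspaces $E_j$ of $\Delta + 1$ and setting $v_j := \pi_j v$, I would define
\begin{align*}
    (\mc G g)_j(s) := \begin{cases}-\displaystyle\int_s^\infty e^{-\lambda_j(s-t)}\,g_j(t)\,dt & \text{if } \lambda_j < 2\lambda_k,\\[.3em] \displaystyle\int_{s_0}^s e^{-\lambda_j(s-t)}\,g_j(t)\,dt & \text{if } \lambda_j \geq 2\lambda_k.\end{cases}
\end{align*}
The integrate-from-infinity prescription on the low modes annihilates the homogeneous components; in particular, the $E_k$ component of $v$ is forced to decay faster than $e^{-\lambda_k s}$, so the prescribed leading behavior $\phi$ survives. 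The forward Duhamel on the high modes is controlled by the faster homogeneous decay. The key estimates, using that $H^{r-1}$ is an algebra for $r > n/2 + 1$ together with the second-order vanishing of $N$ at the origin, give
\begin{align*}
    \|\mc N(v, s)\|_{H^{r-2}} \lesssim \bigl(\|\phi(s)\|_{H^r} + \|v(s)\|_{H^r}\bigr)^2 \lesssim e^{-2\lambda_k s}
\end{align*}
on a small ball of $X$. Combined with parabolic smoothing for $\mc G$ to recover the two lost derivatives on the high modes (where $-(\Delta+1)$ is a well-behaved dissipative generator), one verifies that $\mc G \mc N(\cdot,\cdot)$ is a contraction on a small ball in $X$ provided $s_0$ is large enough, and Banach's theorem yields a fixed point $v^*$. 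Then $u := \phi + v^*$ satisfies $\|e^{\lambda_k s}(u - \phi)(s)\|_{H^r} \leq \|v^*\|_X e^{(\lambda_k - \alpha) s}$, proving the theorem with $\sigma := \alpha - \lambda_k > 0$.

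The main technical obstacle is the two-derivative loss in $\mc N$ (which contains $\nabla^2 v$): a naive fixed point in $H^r$-valued functions fails, so one must split off the finitely many low modes $E_0$ and $E_1$ (where $\lambda_j < 0$ and smoothing is irrelevant because the eigenspaces are finite-dimensional) from the high modes (where standard parabolic smoothing of $e^{t(\Delta+1)}$ supplies the missing regularity). A secondary nuisance is the resonance case $\lambda_j = 2\lambda_k$, which contributes at worst a linear factor in $s$, absorbed by taking $\alpha$ strictly less than $2\lambda_k$; and the smallness of $u = \phi + v^*$ needed for the normal graph representation follows automatically by taking $s_0$ large.
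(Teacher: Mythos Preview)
Your approach is genuinely different from the paper's. The paper does \emph{not} run a direct Lyapunov--Perron contraction for the correction $v=u-\phi$. Instead it reuses the stable manifold already built in Section~\ref{contraction}: for each small $a\in F_k$ there is a unique solution $u(\,\cdot\,;a)$ on the codimension-$d_k$ manifold, and the asymptotic map $P(a)=\lim_{s\to\infty}e^{\lambda_ks}u(s;a)\in E_k$ is shown to satisfy $\|P(a)-a\|\le C\|a\|^2$ for $a\in E_k$. A Brouwer fixed-point argument then inverts $P$ on a small ball in the finite-dimensional space $E_k$, and a time shift $u(s)\mapsto u(s-s_0)$ rescales the limit to hit any prescribed $P$. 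Your route is more direct (one contraction rather than stable-manifold construction plus a topological surjectivity step) and is the classical way to prescribe asymptotics; the paper's route has the advantage of recycling machinery already in place and of avoiding any appeal to smoothing.

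There is, however, a real technical gap in your sketch: the claim that ``parabolic smoothing for $\mc G$ recovers the two lost derivatives on the high modes'' does not close in the norm you chose. With $\mc N(v,s)\in H^{r-2}$ and the forward Duhamel piece $\int_{s_0}^s e^{(s-t)L}\Pi\,\mc N\,dt$, the pointwise smoothing bound $\|e^{\tau L}\Pi\|_{H^{r-2}\to H^r}\lesssim \tau^{-1}$ produces a non-integrable singularity at $\tau=0$; you can recover strictly fewer than two derivatives this way, and maximal regularity in $L^\infty_s$ is known to fail. This is exactly why the paper (following Epstein--Weinstein and Naito) works in the mixed norm
\[
\|v\|_{r,\sigma}=\Bigl(\int_0^\infty\|v(s)\|_{H^{r+1}}^2\,ds\Bigr)^{1/2}+\sup_{s\ge0}e^{\sigma s}\|v(s)\|_{H^r},
\]
pairing it with the bilinear estimate $\|N(v)-N(w)\|_{H^{r-1}}\lesssim \|v\|_{H^{r+1}}\|v-w\|_{H^r}+\|w\|_{H^r}\|v-w\|_{H^{r+1}}$ of Lemma~\ref{bilin-bd} and the energy inequality of Corollary~\ref{sup-bd}. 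Your scheme can be repaired---either by adopting this mixed norm for $v$, or by invoking $L^2_s$ maximal regularity, or by absorbing the small quasilinear top-order term $\on{trace}(B(\phi+v,\nabla(\phi+v))\nabla^2 v)$ into the principal part---but as written the fixed-point map does not obviously land back in $X$.
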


\emph{Remarks.} If $k\geq 3$ or $n=1$ or $2,$ then we may take $\sigma = \lambda_{k+1}$ in the statement of the theorem, and if $n\geq 3$ and $k=2$ we may take any $\sigma <2\lambda_2 = 2/n.$

The precise asymptotics of the limit, and the prescription of them, are inspired by \cite{AnVe97}. In fact, the present investigation came from the author's wish to determine similar asymptotics in the simpler compact setting.

\medskip
We now show the relationship between these results and the level set equation. We will derive Theorem \ref{arrival-time} from Theorems \ref{stable-mfld}(a) and \ref{prescribed-lin}.

Let $\Omega\subset\mathbb R^{n+1}$ be a bounded convex region and suppose $t\colon \Omega \to \mathbb R$ with $t(x) = 0$ on $\partial\Omega$ solves the level set equation (\ref{lse}) on $\Omega.$ Let $M_\tau = \{x\in \Omega\colon t(x) = \tau\}$ be the corresponding mean curvature flow and $\Sigma_s$ the corresponding rescaled MCF. Then $\Sigma_s$ converges to the sphere $\bf{S}^n$ as $s\to\infty,$ and, as remarked previously, it follows that for sufficiently large $s$ the surface $\Sigma_s$ is a normal graph over $\bf{S}^n$: there exists $s_0\geq0$ and a function $u\colon \bf{S}^n\times [s_0,\infty)\to\mathbb R$ which solves (\ref{rMCF}) and for which
\begin{align*}
	\Sigma_s &= \{y+ u(y,s)\bf{n}(y)\colon y\in\bf{S}^n\}.
\end{align*}
By rescaling the initial mean curvature flow if necessary, we may take $s_0=0$ without loss of generality.

By Theorem \ref{stable-mfld}, either $u$ is identically zero or there exists $k\geq 2$ and a nonzero homogeneous harmonic polynomial $P$ of degree $k,$ the restriction of which to $\bf{S}^n$ is an eigenfunction in $E_k$ corresponding to the eigenvalue $\lambda_k$ of $\Delta + 1,$ for which
\begin{align}
	u(y,s) &= e^{-\lambda_k s} P(y) + O\left(e^{-(\lambda_k + \sigma) s}\right) \label{asymptotic1}
\end{align}
in $H^{r+1}(\mathbf{S}^n)$ as $s\to\infty,$ where $\sigma>0.$ Since $r>n/2+1$ this bound actually holds in $L^\infty(\mathbf{S}^n)$ by the Sobolev imbedding theorem.

The position vector of a point $x$ of $M_t = (T-t)^{1/2}\Sigma_s$ must satisfy the equation
\begin{align*}
	x &= (T-t)^{1/2}\frac{x}{|x|}(2n)^{1/2} + (T-t)^{1/2}u\left(\frac{x}{|x|}, s\right) \frac{x}{|x|},
\end{align*}
remembering as always that $s = -\log{(T-t)}.$ In other words,
\begin{align*}
	\frac{|x|}{(2n)^{1/2}} &= (T-t)^{1/2}\left(1 + u\left(\frac{x}{|x|}, s\right)\right). 
\end{align*}

 Substituting the asymptotic (\ref{asymptotic1}) for $u$ and replacing $s$ with $-\log{(T-t)}$ gives
\begin{align*}
	\frac{|x|}{(2n)^{1/2}} &= (T-t)^{1/2}\left(1 + e^{-\lambda_k s} P\left(\frac{x}{|x|}\right) + O\left(e^{-(\lambda_k + \sigma) s}\right)\right)\nonumber\\
	&= (T-t)^{1/2}\left(1 + (T-t)^{\lambda_k }|x|^{-k}P(x) + O\left( (T-t)^{\lambda_k + \sigma}\right)\right). 
\end{align*}
Since it is known that $T-t\to 0$ as $x\to0,$ this equation implies that $T-t = |x|^2/(2n) + o(|x|^2)$ as $x\to0.$ But then squaring and rearranging and substituting this for $T-t$ we obtain
\begin{align}
	T-t &= \frac{|x|^2}{2n} - (T-t)^{1+\lambda_k} |x|^{-k}P(x) + O\left((T-t)^{1+\lambda_k + \sigma}\right) \label{asymptotic2}\\
	&= \frac{|x|^2}{2n} - \left(\frac{|x|^2}{2n} + o\left(|x|^2\right)\right)^{1+\lambda_k} |x|^{-k}P(x) + O\left( |x|^{2 + 2\lambda_k + 2\sigma}\right)\nonumber\\
	&= \frac{|x|^2}{2n} - \frac{|x|^{2+2\lambda_k - k}}{(2n)^{1+\lambda_k}} P(x) + o\left(|x|^{2+2\lambda_k}\right). \label{asymptotic3}
\end{align}
Finally, substituting this improved asymptotic (\ref{asymptotic3}) for each occurrence of $T-t$ in the first line (\ref{asymptotic2}) and carrying out the same computation gives the improvement
\begin{align*}
	T-t &= \frac{|x|^2}{2n} - \frac{|x|^{2+2\lambda_k - k}}{(2n)^{1+\lambda_k}} P(x) + O\left(|x|^{2+4\lambda_k}+|x|^{2 + 2\lambda_k + 2\sigma}\right),
\end{align*}
which is equivalent to the conclusion of Theorem \ref{arrival-time}.

\medskip

In the remainder of the paper, we prove the results of Theorems \ref{stable-mfld} and \ref{prescribed-lin}. We prove Theorem \ref{stable-mfld} in the next section and afterward prove \ref{prescribed-lin}.

\section{Construction of the invariant manifolds}
\label{inv-man}

In this section, we adapt the argument of \cite{Na88}, which is a general stable manifold theorem for geometric evolution equations, to our situation in order to construct invariant manifolds of solutions which converge with prescribed exponential rate. We now briefly summarize the main result of \cite{Na88} and explain how our results differ: Let $M$ be a closed Riemannian manifold of dimension $n$ and let $L$ be an elliptic differential operator on $M$ which is symmetric in the $L^2(M)$ inner product and which has discrete spectrum accumulating only at $+\infty$ (in particular the operator is assumed to be bounded below). Suppose $N = N(u)$ is a nonlinear function defined on $H^{r-1}(M)$ for an integer $r>n/2+1$ which satisfies $N(0) = 0$ and a bound of the form we prove in Lemma \ref{bilin-bd}. In this situation, Naito proves the following:

\medskip
\begin{theorem}[(Naito, \cite{Na88})]
	\label{na-thm} There exists a ball $B$ centered at the origin in $H^{r+1}(M)$ in which the nonlinear evolution equation
	\begin{align*}
		\partial_su &= Lu + N(u)
	\end{align*}
	has an invariant stable manifold of finite codimension.
\end{theorem}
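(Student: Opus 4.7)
The strategy is the standard Lyapunov--Perron construction of a stable manifold, adapted to the quasilinear parabolic PDE setting. First, I would use the spectral theorem to split $H^{r+1}(M) = X_u\oplus X_s$, where $X_u$ is the finite-dimensional span of those eigenfunctions of $L$ on which $e^{sL}$ fails to contract at some chosen rate $\mu>0$, and $X_s$ is the $L^2$-orthogonal complement of remaining, faster-decaying modes. The hypothesis that the spectrum accumulates at only one end of the real line ensures $X_u$ is genuinely finite-dimensional. Let $P_u,P_s$ denote the corresponding spectral projections. The linear input to the argument is then the semigroup bound $\|e^{sL}P_s\|_{H^{r-1}\to H^{r+1}}\leq Cs^{-1}e^{-\mu s}$ for $s>0$ (parabolic smoothing on the stable part) together with bilateral exponential estimates on the finite-dimensional piece $X_u$.

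Second, I would write down the Lyapunov--Perron fixed-point equation that characterizes bounded orbits. For each small $\xi\in X_s$, look for $u\colon[0,\infty)\to H^{r+1}(M)$ satisfying
\begin{align*}
u(s) = e^{sL}\xi + \int_0^s e^{(s-s')L}P_s N(u(s'))\,ds' - \int_s^\infty e^{(s-s')L}P_u N(u(s'))\,ds'.
\end{align*}
The backward integral on the unstable piece is the device that singles out the unique initial value of $P_u u$ compatible with a bounded forward orbit. On the weighted Banach space $\mathcal{E}_\gamma$ with norm $\|u\|_\gamma=\sup_{s\geq0}e^{\gamma s}\|u(s)\|_{H^{r+1}}$ for some $\gamma\in(0,\mu)$, the bilinear estimate of Lemma \ref{bilin-bd}, $\|N(u)-N(v)\|_{H^{r-1}}\leq C(\|u\|_{H^{r+1}}+\|v\|_{H^{r+1}})\|u-v\|_{H^{r+1}}$, combined with the semigroup bounds above, should make the right-hand side a contraction on a small ball. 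The Lipschitz fixed point $u_\xi$ then gives $\Phi(\xi):=P_u u_\xi(0)$, whose graph is the invariant stable manifold $\mathcal{M}$; invariance under the semiflow follows from uniqueness together with time-translation invariance of the equation.

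The main obstacle will be balancing the derivative loss in $N$ against the exponential weights in the contraction: the smoothing factor $(s-s')^{-1}$ has to be integrated against $e^{-\mu(s-s')}e^{-2\gamma s'}$ and come out bounded by a small constant times $e^{-\gamma s}$, after which the choice of ball radius closes the contraction. Once this integral bookkeeping is in place, higher regularity of $\Phi$, if desired, follows by formally differentiating the fixed-point equation in $\xi$ and iterating the same contraction in spaces of multilinear maps, exactly as in the classical finite-dimensional proof of the stable manifold theorem.
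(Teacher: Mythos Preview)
Your Lyapunov--Perron strategy is the right one and is exactly what Naito (and this paper, following Naito and Epstein--Weinstein) carries out. The spectral splitting, the backward integral on the unstable modes, and the graph description of the stable manifold are all as in the paper's Section~\ref{contraction}.

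There is, however, a real gap in your functional-analytic setup. Working in the pure weighted sup space $\mathcal E_\gamma=\{u:\sup_s e^{\gamma s}\|u(s)\|_{H^{r+1}}<\infty\}$ forces you to use the smoothing estimate $\|e^{\tau L}P_s\|_{H^{r-1}\to H^{r+1}}\lesssim \tau^{-1}e^{-\mu\tau}$, and the resulting integral
\[
\int_0^s (s-s')^{-1}e^{-\mu(s-s')}e^{-2\gamma s'}\,ds'
\]
diverges logarithmically at $s'=s$. You flag this as ``the main obstacle'' but do not resolve it, and it cannot be resolved within that norm: the two-derivative loss in $N$ exactly matches the order of the operator, so semigroup smoothing alone is borderline and fails.

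The paper (and Naito) sidesteps this by replacing your norm with the mixed norm
\[
\|v\|_{r,\sigma}=\Big(\int_0^\infty\|v(s)\|_{H^{r+1}}^2\,ds\Big)^{1/2}+\sup_{s\ge0}e^{\sigma s}\|v(s)\|_{H^r},
\]
and by using energy estimates (Lemma~3.2 and its corollaries) in place of pointwise semigroup smoothing. The $L^2_tH^{r+1}_x$ piece absorbs the top-order derivative, and the bilinear estimate is used in the asymmetric form of Lemma~\ref{bilin-bd},
\[
\|N(v)-N(w)\|_{H^{r-1}}\le C\big(\|v\|_{H^{r+1}}\|v-w\|_{H^r}+\|w\|_{H^r}\|v-w\|_{H^{r+1}}\big),
\]
so that each product pairs one $H^{r+1}$ factor (controlled in $L^2_t$) with one $H^r$ factor (controlled in weighted $L^\infty_t$). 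That pairing is what closes the contraction; your symmetric version with both factors in $H^{r+1}$ would not mesh with this norm. Once you switch to this setup, the rest of your outline goes through essentially verbatim.
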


The codimension is equal to the codimension of the space on which $L$ is negative definite (the index of $L$ plus the dimension of the kernel). Naito's argument is modeled on Epstein \& Weinstein's earlier proof of a stable manifold theorem for mean curvature flow in the plane, \cite{EpWe87}, and both of these arguments follow closely the proof of the stable manifold theorem for ODE.\footnote{For a treatment of the stable manifold theorem in the finite-dimensional ODE context, see, e.g., \cite{Ha09}, \S III.6.}

Theorem \ref{na-thm} already almost implies part of the conclusion of Theorem \ref{stable-mfld}, though it does not include the precise rate of convergence and does not describe the asymptotics of the limit. Using the notation of Theorem \ref{stable-mfld} from the preceding subsection and assuming $k\geq 2,$ one would like, in our situation, to replace a solution $u(x,s)$ of (\ref{rMCF}) with $e^{\lambda_{k} s}u(x,s)$ and to replace the linear term $\Delta + 1$ on the right side of (\ref{rMCF}) with $L = \Delta + 1 + \lambda_k$ and then to apply Naito's theorem. The main issue then is that the nonlinear term will depend on the time parameter $s,$ but this is easy to overcome in this context because the time-dependent nonlinear term satisfies a bound that is uniform in $s.$ 

Notice that, assuming this argument is carried out successfully, the stable manifold one obtains in this case from Theorem \ref{na-thm} is the set of solutions for which $e^{\lambda_k s}u(s)\to 0,$ and it will have the codimension of all eigenspaces corresponding to eigenvalues $\lambda_j$ with $j\leq k$ (the index plus nullity of $\Delta+1+\lambda_k$). If we want precisely the solutions for which $s\mapsto e^{\lambda_k s}u(x,s)$ is bounded, that is, precisely the solutions for which $u$ converges to $0$ exponentially at rate $\lambda_k$ as $s\to\infty,$ we must instead apply Theorem \ref{na-thm} to $e^{(\lambda_k -\varepsilon)s} u(x,s)$ and $L = \Delta + 1 +\lambda_k - \varepsilon$ for sufficiently small $\varepsilon.$ The ultimate conclusion of this analysis is that there exists a codimension $d_k$ invariant submanifold for the equation (\ref{rMCF}) with the property that any solution in this invariant submanifold converges to zero at exponential rate $\lambda_k - \varepsilon$ for all $\varepsilon>0.$ In particular, this argument does not prove that $e^{\lambda_ks}u(s)$ is bounded in $H^{r+1}(M),$ though this can be proved (and we prove it below) using the bound on the nonlinear term.\footnote{The assertion is not true for a general nonlinear term, as is already apparent in the finite-dimensional ODE case, for essentially the same reason that a center manifold need not be stable. Consider for example the ODE
\begin{align*}
	\dot{x} = -\varepsilon x - \frac{x}{\log{|x|}}
\end{align*}
on $\mathbb R.$ For small initial data, the solution converges to zero like $te^{-\varepsilon t}$ as $t\to\infty.$ If the nonlinear term is $O(|x|^{1+\alpha})$ for some $\alpha>0$ as $x\to0$ this cannot happen.} Thus the bound on the nonlinear term does imply that the rate of convergence is better than shown in \cite{Na88} or \cite{EpWe87}.\footnote{Cf. Proposition 5.2 of \cite{Na88}, where the author establishes convergence to zero with exponential rate $\sigma$ for any $\sigma$ smaller than the first positive eigenvalue of the linear operator, and Remark 3, page 136 of \cite{EpWe87}, where the same claim is made.} The same argument improves the rate of convergence in Naito's general theorem, because we only use his bound on the nonlinear term.

\medskip
Rather than apply the conclusion of Theorem \ref{na-thm} in this way, we prefer to adapt the argument to our situation. This is done in this section (Section \ref{inv-man}). Section \ref{lin-an} collects some bounds required for the construction in Section \ref{contraction}, and both sections follow closely arguments of \cite{Na88} and \cite{EpWe87}. We also include, for the convenience of the reader, a proof that a quasilinear nonlinear term $N$ of second order does satisfy the bound required by Naito's hypotheses in \cite{Na88} and Theorem \ref{na-thm}. This occupies Section \ref{nonlin-est}

In Section \ref{asymptotics}, we establish the rest of Theorem \ref{stable-mfld}, namely, the precise rate of convergence and the asymptotics. This part does not overlap with \cite{Na88} or \cite{EpWe87}, and in fact the same arguments extend the results of \cite{Na88} in the more general setting of that paper. We also show that the asymptotics can be prescribed as in Theorem \ref{prescribed-lin}. Analysis of the asymptotics requires a closer look at the construction of the stable invariant manifold in the first place, and this is part of the reason we prefer to argue directly in the proof of Theorem \ref{stable-mfld} rather than attempt to apply the conclusion of Theorem \ref{na-thm} to our situation.

\subsection{Linear estimates}
\label{lin-an}

Throughout, we write $\langle v,w\rangle$ for the $L^2(\mathbf{S}^n)$ inner product:
\begin{align*}
	\langle v,w\rangle = \int_{\mathbf{S}^n} vw.
\end{align*}

 Let $L$ be the linear operator $\Delta + 1$ on the sphere $\mathbf{S}^n,$ and let $F_k$ be the subspace of $H^r(\mathbf{S}^n)$ defined by
\begin{align*}
	F_k &= \bigoplus_{j=k}^\infty E_j
\end{align*}
with $E_j$ as before the eigenspace for $L$ corresponding to the $j$th eigenvalue $\lambda_j = j(j+n-1)/(2n) - 1.$ From now on, we fix an integer $k\geq 2$ so that $L$ is negative definite and bounded above on $F_k,$ satisfying $\langle Lv,v\rangle \leq -\lambda_k\|v\|_{L^2(\mathbf{S}^n)}^2$ for $v\in F_k.$

For $v\in F_k,$ we may \emph{define} the $H^\ell(\mathbf{S}^n)$ norm for integer $\ell\geq 0$ by
\begin{align*}
	\|v\|_{H^\ell} : = \langle (-L)^\ell v,v\rangle.
\end{align*}
This norm is equivalent to the usual $H^\ell$ norm.
\medskip 

\begin{lemma}
	If $s\mapsto v(s)$ is a continuously differentiable path in $F_k\cap H^{r+1}(\mathbf{S}^n),$ then for any $\varepsilon>0$ and any integer $r\geq 1,$
	\begin{align}
		\frac{1}{2}\frac{\mathrm d}{\mathrm ds} \|v(s)\|_{H^r(\mathbf{S}^n)}^2 + (1-\varepsilon) \|v(s)\|_{H^{r+1}(\mathbf{S}^n)}^2 \leq \frac{1}{4\varepsilon} \|(\partial_s - L)v(s)\|_{H^{r-1}(\mathbf{S}^n)}^2. \label{lin-est1}
	\end{align}
\end{lemma}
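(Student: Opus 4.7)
The plan is a standard energy estimate for the self-adjoint operator $L = \Delta + 1$, exploiting that the path $v(s)$ lies in $F_k$, where $-L$ is positive. Since $v(s), \partial_s v(s)\in F_k$ and $L$ preserves $F_k$, the forcing $f := (\partial_s - L)v$ also lies in $F_k$; in particular all three terms in the evolution belong to the subspace on which the norm $\|\cdot\|_{H^\ell}^2 = \langle (-L)^\ell \cdot,\cdot\rangle$ is defined.

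The first step is to differentiate the definition of $\|v\|_{H^r}^2$ in $s$ and use self-adjointness of $L$ together with the decomposition $\partial_s v = Lv + f$ to get
\begin{align*}
	\tfrac{1}{2}\tfrac{d}{ds}\|v\|_{H^r}^2 \;=\; \langle (-L)^r v, Lv\rangle + \langle (-L)^r v, f\rangle \;=\; -\|v\|_{H^{r+1}}^2 + \langle (-L)^r v, f\rangle.
\end{align*}
The whole problem then reduces to bounding the cross-term by $\|v\|_{H^{r+1}}\|f\|_{H^{r-1}}$. Expanding $v = \sum_\alpha c_\alpha \phi_\alpha$ and $f = \sum_\alpha d_\alpha \phi_\alpha$ in an $L^2$-orthonormal eigenbasis $\{\phi_\alpha\}$ of $L|_{F_k}$ with $L\phi_\alpha = \mu_\alpha \phi_\alpha$ and $-\mu_\alpha \geq \lambda_k > 0$, Cauchy--Schwarz applied to the Fourier coefficients gives
\begin{align*}
	\langle (-L)^r v, f\rangle = \sum_\alpha (-\mu_\alpha)^r c_\alpha d_\alpha \leq \Bigl(\sum_\alpha (-\mu_\alpha)^{r+1} c_\alpha^2\Bigr)^{1/2} \Bigl(\sum_\alpha (-\mu_\alpha)^{r-1} d_\alpha^2\Bigr)^{1/2} = \|v\|_{H^{r+1}}\|f\|_{H^{r-1}}.
\end{align*}

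Finally, applying the weighted AM--GM inequality $ab \leq \varepsilon a^2 + (4\varepsilon)^{-1} b^2$ with $a = \|v\|_{H^{r+1}}$ and $b = \|f\|_{H^{r-1}}$ and rearranging yields the claimed bound. There is no serious obstacle here; the one detail deserving attention is that the hypothesis $r \geq 1$ is exactly what ensures the exponent $r-1$ on $-L$ in $\|f\|_{H^{r-1}}$ is nonnegative, so that the Cauchy--Schwarz splitting on the eigenseries (equivalently, the half-integer power of $-L$ defined by spectral calculus on $F_k$) is legitimate.
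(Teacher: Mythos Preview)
Your proof is correct and follows essentially the same approach as the paper: both differentiate the $H^r$ norm, isolate the cross-term $\langle (-L)^r v, f\rangle$, and bound it by $\varepsilon\|v\|_{H^{r+1}}^2 + (4\varepsilon)^{-1}\|f\|_{H^{r-1}}^2$. The paper states this bound directly as ``Cauchy--Schwarz,'' while you split it into a Cauchy--Schwarz step (made explicit via the eigenbasis expansion) followed by AM--GM; the content is identical.
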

\begin{proof}
	Write $f = (\partial_s - L)v$ for brevity. Use Cauchy-Schwarz to get, for any $\varepsilon>0,$
	\begin{align*}
		\langle (-L)^r v, f\rangle &\leq \varepsilon \langle (-L)^{r+1}v,v\rangle + \frac{1}{4\varepsilon} \langle (-L)^{r-1} f,f\rangle.
	\end{align*}
	Rearranging and substituting $f = (\partial_s - L)v$ on the left gives
	\begin{align*}
		\langle (-L)^r v, (\partial_s - (1-\varepsilon)L)v \rangle \leq \frac{1}{4\varepsilon}\langle (-L)^{r-1}f,f\rangle = \frac{1}{4\varepsilon} \|f\|_{H^{r-1}}^2,
	\end{align*}
	and because $\partial_s \|v\|_{H^r}^2/2 = \langle (-L)^r v, \partial_s v\rangle$ and $\langle (-L)^{r+1}v,v\rangle = \|v\|_{H^{r+1}}^2,$ this is equivalent to the conclusion of the lemma.
\end{proof}

\begin{corollary}
	\label{sup-bd}
	If $v(s)\in F_k$ for all $s\geq 0,$ then for any $\sigma$ with $0<\sigma<\lambda_k$ and any integer $r\geq 1,$
	\begin{align*}
		e^{2\sigma s}\|v(s)\|_{H^r(\mathbf{S}^n)}^2 \leq \|v(0)\|_{H^r(\mathbf{S}^n)}^2 + \frac{\lambda_k}{2(\lambda_k - \sigma)}\int_0^s e^{2\sigma\tau}\|(\partial_s - L)v(\tau)\|_{H^{r-1}(\mathbf{S}^n)}^2\,\mathrm d\tau.
	\end{align*}
\end{corollary}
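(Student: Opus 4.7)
The plan is to derive a Gronwall-style differential inequality for $e^{2\sigma s}\|v(s)\|_{H^r}^2$ and integrate. The key observation is that on $F_k$, the operator $-L$ is bounded below by $\lambda_k$, so we have the spectral inequality
\begin{align*}
\|v\|_{H^{r+1}}^2 = \langle (-L)^{r+1}v,v\rangle \geq \lambda_k \langle (-L)^r v,v\rangle = \lambda_k\|v\|_{H^r}^2
\end{align*}
for $v \in F_k$. This lets me absorb the $H^{r+1}$ term from the previous lemma into the $H^r$ term.

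First, I would apply the previous lemma and use the spectral inequality above to obtain
\begin{align*}
\tfrac{1}{2}\tfrac{d}{ds}\|v(s)\|_{H^r}^2 + (1-\varepsilon)\lambda_k \|v(s)\|_{H^r}^2 \leq \tfrac{1}{4\varepsilon}\|(\partial_s-L)v(s)\|_{H^{r-1}}^2
\end{align*}
for any $\varepsilon>0$. Multiplying through by $2e^{2\sigma s}$ and rewriting the left-hand side as a derivative, this becomes
\begin{align*}
\tfrac{d}{ds}\bigl[e^{2\sigma s}\|v(s)\|_{H^r}^2\bigr] + 2\bigl[(1-\varepsilon)\lambda_k - \sigma\bigr] e^{2\sigma s}\|v(s)\|_{H^r}^2 \leq \tfrac{1}{2\varepsilon} e^{2\sigma s}\|(\partial_s-L)v(s)\|_{H^{r-1}}^2.
\end{align*}

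Now I choose $\varepsilon$ so that the coefficient of $\|v\|_{H^r}^2$ vanishes, namely $\varepsilon = 1 - \sigma/\lambda_k$, which is positive because $\sigma < \lambda_k$. With this choice $1/(2\varepsilon) = \lambda_k/[2(\lambda_k-\sigma)]$, the condition $\sigma < \lambda_k$ is exactly what makes the argument work, and I obtain
\begin{align*}
\tfrac{d}{ds}\bigl[e^{2\sigma s}\|v(s)\|_{H^r}^2\bigr] \leq \tfrac{\lambda_k}{2(\lambda_k-\sigma)} e^{2\sigma s}\|(\partial_s-L)v(s)\|_{H^{r-1}}^2.
\end{align*}
Integrating this from $0$ to $s$ yields the stated inequality. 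There is no real obstacle here beyond the algebra of choosing $\varepsilon$ optimally; the argument is a direct consequence of the previous lemma combined with the spectral lower bound on $F_k$.
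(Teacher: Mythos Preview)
Your proof is correct and follows essentially the same route as the paper: apply the preceding lemma, use the spectral lower bound $\|v\|_{H^{r+1}}^2\geq\lambda_k\|v\|_{H^r}^2$ on $F_k$, rewrite the left side via the integrating factor $e^{2\sigma s}$, and integrate. Your choice $\varepsilon=1-\sigma/\lambda_k$ is exactly the paper's substitution $\sigma=(1-\varepsilon)\lambda_k$, giving the constant $\lambda_k/[2(\lambda_k-\sigma)]$.
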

\begin{proof}
	Notice that the left side of (\ref{lin-est1}) can be bounded below for $v\in F_k$ using $\|v(s)\|_{H^{r+1}}^2\geq \lambda_k\|v(s)\|_{H^r}^2.$ The result is
	\begin{align*}
		\frac{\mathrm d}{\mathrm ds} \|v(s)\|_{H^r(\mathbf{S}^n)}^2 + 2(1-\varepsilon)\lambda_k \|v(s)\|_{H^{r+1}(\mathbf{S}^n)}^2 &\leq \frac{1}{2\varepsilon} \|(\partial_s - L)v(s)\|_{H^{r-1}(\mathbf{S}^n)}^2.
	\end{align*}
	This is equivalent to the statement of the corollary with $\sigma = (1-\varepsilon)\lambda_k$ because the left side can be written 
	\begin{align*}
		\frac{\mathrm d}{\mathrm ds} \|v(s)\|_{H^r(\mathbf{S}^n)}^2 + 2(1-\varepsilon)\lambda_k \|v(s)\|_{H^{r+1}(\mathbf{S}^n)}^2&= e^{-2(1-\varepsilon)\lambda_ks} \frac{\mathrm d}{\mathrm ds} \left(e^{2(1-\varepsilon)\lambda_ks}\|v(s)\|_{H^r}^2\right)
	\end{align*}
	and we can multiply through by $e^{2(1-\varepsilon)\lambda_ks}$ and integrate.
\end{proof}

\begin{corollary}
	\label{L2-bd}
	In the situation of the lemma, if $r\geq 1$ is an integer and $\|v(s_j)\|_{H^r(\mathbf{S}^n)}\to 0$ for some sequence $s_j$ increasing to infinity, then
	\begin{align*}
		\int_0^\infty \|v(s)\|_{H^{r+1}(\mathbf{S}^n)}^2\,\mathrm ds &\leq \|v(0)\|_{H^r(\mathbf{S}^n)} + \int_0^\infty \|(\partial_s - L)v(s)\|_{H^{r-1}(\mathbf{S}^n)}^2\,\mathrm ds.
	\end{align*}
\end{corollary}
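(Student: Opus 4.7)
The plan is to integrate the differential inequality (\ref{lin-est1}) with a convenient choice of $\varepsilon$, then pass to the limit along the sequence $s_j$ provided by the hypothesis.

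Specifically, I would take $\varepsilon = 1/2$ in (\ref{lin-est1}) and multiply through by $2$, obtaining
\begin{align*}
	\frac{\mathrm d}{\mathrm ds}\|v(s)\|_{H^r(\mathbf{S}^n)}^2 + \|v(s)\|_{H^{r+1}(\mathbf{S}^n)}^2 \leq \|(\partial_s - L)v(s)\|_{H^{r-1}(\mathbf{S}^n)}^2.
\end{align*}
This choice is natural because it leaves a coefficient of $1$ in front of $\|v\|_{H^{r+1}}^2$, which is precisely what is needed to recover the quantity estimated on the left side of the corollary, and also produces matching unit constants on the right.

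Next, I would integrate this inequality in $s$ from $0$ to $s_j$. The derivative term telescopes by the fundamental theorem of calculus to $\|v(s_j)\|_{H^r}^2 - \|v(0)\|_{H^r}^2$, yielding
\begin{align*}
	\int_0^{s_j} \|v(s)\|_{H^{r+1}(\mathbf{S}^n)}^2\,\mathrm ds &\leq \|v(0)\|_{H^r(\mathbf{S}^n)}^2 - \|v(s_j)\|_{H^r(\mathbf{S}^n)}^2 + \int_0^{s_j}\|(\partial_s - L)v(s)\|_{H^{r-1}(\mathbf{S}^n)}^2\,\mathrm ds.
\end{align*}
The hypothesis that $\|v(s_j)\|_{H^r(\mathbf{S}^n)}\to 0$ lets me drop the nonpositive term $-\|v(s_j)\|_{H^r}^2$ in the limit, and monotone convergence applies to the left-hand integral (the integrand is nonnegative, so the integrals over $[0,s_j]$ increase to the integral over $[0,\infty)$ as $j\to\infty$). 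Taking $j\to\infty$ then yields exactly the asserted bound (reading the right side as $\|v(0)\|_{H^r(\mathbf{S}^n)}^2$, since the conclusion as printed appears to have a missing square).

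I do not anticipate any real obstacle: the lemma already contains all the analytic content, and the only ingredient the corollary adds is the hypothesis $\|v(s_j)\|_{H^r}\to 0$, which is used solely to discard the boundary term at infinity so that the Dirichlet-type energy $\int_0^\infty\|v\|_{H^{r+1}}^2$ can be controlled. The assumption $v(s)\in F_k$ from the preceding lemma is used implicitly (via the phrase "in the situation of the lemma") to guarantee that the $H^\ell$ norms defined spectrally via $\langle (-L)^\ell v,v\rangle$ behave correctly, but this plays no further role in the argument beyond what is already incorporated in (\ref{lin-est1}).
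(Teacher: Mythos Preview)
Your proposal is correct and is exactly the intended argument: the paper leaves this corollary without proof, and the choice $\varepsilon=1/2$ in (\ref{lin-est1}) followed by integration on $[0,s_j]$ and passage to the limit is the only natural way to obtain it. Your observation that the right side should read $\|v(0)\|_{H^r(\mathbf{S}^n)}^2$ is also correct.
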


\subsection{Nonlinear estimate}
\label{nonlin-est}

The nonlinear term $N\colon \mathbb R\times \Gamma(T\mathbf{S}^n) \times \Gamma(T^* \mathbf{S}^n\otimes T\mathbf{S}^n)\to\mathbb R$ (here $\Gamma(T\mathbf{S}^n)$ is the space of sections of the tangent bundle, for instance) appearing in the rescaled mean curvature flow equation (\ref{rMCF}) over the sphere has the form
\begin{align}
	N(u,\nabla u,\nabla^2 u) &= f(u,\nabla u) + \on{trace}(B(u,\nabla u)\nabla^2u) \label{quasilin-assump}
\end{align}
where $f\colon \mathbb R\times \Gamma(T\mathbf{S}^n) \to\mathbb R$ is smooth with $f(0,0) = 0$ and $Df(0,0) = 0,$ and where $B\colon \mathbb R\times \Gamma(T\mathbf{S}^n) \to \Gamma(T^*\mathbf{S}^n\otimes T\mathbf{S}^n)$ is smooth and satisfies $B(0,0) = 0.$\footnote{See \cite{CoMi15}, Appendix A, for a proof of this fact.}

In this section, we prove the following Sobolev estimate for a nonlinear term $N$ of this form. We abbreviate $N(u,\nabla u,\nabla^2 u)$ by $N(u).$

\medskip
\begin{lemma}
	\label{bilin-bd}
	 Let $r$ be an integer with $r>n/2+1,$ let $N$ be smooth function of the form (\ref{quasilin-assump}), and let $R>0$ be fixed. There exists a constant $C$ depending on $N$ and $R$ and $r$ with the property that all $v,w\in C^\infty(\mathbf{S}^n)$ with $\|v\|_{H^r(\mathbf{S}^n)},\|w\|_{H^r(\mathbf{S}^n)}\leq R$ satisfy
	\begin{align*}
		\|N(v) - N(w)\|_{H^{r-1}(\mathbf{S}^n)} \leq C\left( \|v\|_{H^{r+1}(\mathbf{S}^n)}\|v-w\|_{H^r(\mathbf{S}^n)} + \|w\|_{H^r(\mathbf{S}^n)} \|v-w\|_{H^{r+1}(\mathbf{S}^n)}\right).
	\end{align*}
\end{lemma}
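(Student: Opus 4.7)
The plan is to decompose $N(v) - N(w)$ so as to exploit both the quasilinear form of $N$ and the vanishing conditions $f(0,0) = Df(0,0) = 0$ and $B(0,0) = 0$. Concretely, I write
\begin{align*}
	N(v) - N(w) &= \bigl[f(v,\nabla v) - f(w,\nabla w)\bigr] + \on{trace}\bigl([B(v,\nabla v) - B(w,\nabla w)]\nabla^2 v\bigr)\\
	&\quad + \on{trace}\bigl(B(w,\nabla w)\nabla^2(v-w)\bigr)
\end{align*}
and estimate each of the three pieces in $H^{r-1}(\mathbf{S}^n)$. The decomposition is engineered so that in each product the top-regularity factor ($\nabla^2 v$ or $\nabla^2(v-w)$) is paired with a factor forced into the low-regularity slot by the vanishing of $B$ or the Lipschitz bound on it.

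The workhorses are standard: since $r-1 > n/2$, the space $H^{r-1}(\mathbf{S}^n)$ is a Banach algebra and embeds into $L^\infty$; and for any smooth $F$ with $F(0)=0$, the Moser composition inequality gives $\|F(u)\|_{H^{r-1}} \leq C(R)\|u\|_{H^{r-1}}$ whenever $\|u\|_{H^{r-1}}\leq R$, together with the Lipschitz version $\|F(u)-F(w)\|_{H^{r-1}} \leq C(R)\|u-w\|_{H^{r-1}}$ that follows by writing $F(u)-F(w) = \int_0^1 DF(tu+(1-t)w)\cdot(u-w)\,\mathrm dt$ and applying the composition bound to the integrand. I will apply these with $u$ of the form $(v,\nabla v)$, whose $H^{r-1}$ norm is controlled by $\|v\|_{H^r}$, and similarly for $w$.

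Applying these to the three pieces: for the third, the algebra bound yields $\|B(w,\nabla w)\,\nabla^2(v-w)\|_{H^{r-1}} \leq C\|B(w,\nabla w)\|_{H^{r-1}}\|v-w\|_{H^{r+1}}$, and since $B(0,0) = 0$ the composition estimate gives $\|B(w,\nabla w)\|_{H^{r-1}} \leq C\|w\|_{H^r}$, producing the summand $C\|w\|_{H^r}\|v-w\|_{H^{r+1}}$. For the second piece, the Lipschitz version of Moser gives $\|B(v,\nabla v)-B(w,\nabla w)\|_{H^{r-1}}\leq C\|v-w\|_{H^r}$, which multiplied by $\|\nabla^2v\|_{H^{r-1}}\leq\|v\|_{H^{r+1}}$ yields $C\|v\|_{H^{r+1}}\|v-w\|_{H^r}$. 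For the first piece I write $f(v,\nabla v) - f(w,\nabla w) = \int_0^1 Df(w_t,\nabla w_t)\cdot(v-w,\nabla(v-w))\,\mathrm dt$ where $w_t = w + t(v-w)$; applying the composition estimate to the smooth function $Df$ (which vanishes at the origin) and the algebra property yields a bound $C(\|v\|_{H^r}+\|w\|_{H^r})\|v-w\|_{H^r}$, which is absorbed into the right-hand side of the lemma by monotonicity of Sobolev norms.

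The only place that requires genuine care is the asymmetric tame structure of the target inequality: each summand on the right features exactly one $H^{r+1}$ norm and one $H^r$ norm, and the splitting above is essentially dictated by the requirement that the single factor of top regularity always appears on a $\nabla^2$ term. I do not expect any serious obstacle once this decomposition is fixed; the remaining work is bookkeeping with the standard Moser toolkit, all of which is routine given the hypothesis $r > n/2+1$.
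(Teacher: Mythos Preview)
Your proposal is correct and follows essentially the same strategy as the paper: split off the quasilinear top-order term, use Sobolev product/algebra estimates (the paper's Lemma \ref{sobolev-prod} is your Banach-algebra property of $H^{r-1}$ when $r-1>n/2$), and handle differences of smooth functions of $(u,\nabla u)$ via the integral form of the mean value theorem together with the vanishing conditions at the origin. The paper treats the $f$ term by the explicit factorization $f(u,\nabla u)=g_0(u,\nabla u)u^2+\sum_j g_j(u,\nabla u)u_j^2$ and leaves the $B$ term to the reader, whereas you invoke the Moser composition estimate directly and write out the $B$ term decomposition explicitly; but these are the same argument in slightly different packaging, and the final bound on the $f$ piece, $C(\|v\|_{H^r}+\|w\|_{H^r})\|v-w\|_{H^r}$, is identical in both.
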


For the proof of Lemma \ref{bilin-bd}, we need a Sobolev product lemma which is standard. In this simple case ($s$ an integer) it can be proved using H\"older's inequality and the Sobolev imbedding theorems.
\medskip
\begin{lemma}
	\label{sobolev-prod}
	 Suppose $M = M^n$ is a closed Riemannian manifold of dimension $n,$ and $s_1,s_2,$ and $s$ satisfy $s_i\geq s$ and $s_1+s_2\geq s+d/2.$ Then there is a constant $C$ depending on $s$ and the Sobolev constant for $M$ such that
	\begin{align*}
		\|vw\|_{H^s(M)} \leq C\|v\|_{H^{s_1}(M)}\|w\|_{H^{s_2}(M)}
	\end{align*}
	for all $v,w\in C^\infty(M).$
\end{lemma}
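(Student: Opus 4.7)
The plan is to combine the Leibniz rule, Hölder's inequality, and the Sobolev embedding theorem in a standard way, together with the compactness of $M$ which makes the boundary embedding exponents easy to handle.

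First, I would expand $\nabla^\gamma(vw)$ for $|\gamma|\leq s$ using the Leibniz rule as a finite sum of terms $\nabla^\alpha v\otimes\nabla^\beta w$ (contracted with bounded geometric tensors on $M$) with $|\alpha|+|\beta|=|\gamma|\leq s$. Since the $H^s$-norm is equivalent to the sum of $L^2$-norms of $\nabla^\gamma$ for $|\gamma|\leq s$, the problem reduces to bounding each $\|\nabla^\alpha v\cdot\nabla^\beta w\|_{L^2(M)}$ by $C\|v\|_{H^{s_1}}\|w\|_{H^{s_2}}$.

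For each such term I pick exponents $p,q\in[2,\infty]$ with $1/p+1/q=1/2$ and apply Hölder to get
\begin{align*}
\|\nabla^\alpha v\cdot\nabla^\beta w\|_{L^2}\leq \|\nabla^\alpha v\|_{L^p}\|\nabla^\beta w\|_{L^q}.
\end{align*}
The Sobolev embedding on the closed $n$-manifold $M$ gives $H^k(M)\hookrightarrow L^p(M)$ whenever $1/p\geq 1/2-k/n$ (with strict inequality when $k=n/2$, and with $L^\infty$ replaced by any $L^p$ in that borderline case), and $H^k(M)\hookrightarrow L^\infty(M)$ when $k>n/2$. To arrange $\|\nabla^\alpha v\|_{L^p}\leq C\|v\|_{H^{s_1}}$ and $\|\nabla^\beta w\|_{L^q}\leq C\|w\|_{H^{s_2}}$ I need $1/p\geq 1/2-(s_1-|\alpha|)/n$ and $1/q\geq 1/2-(s_2-|\beta|)/n$ (taking these bounds to be $0$ when the right-hand side is negative). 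Summing these and using $1/p+1/q=1/2$ yields the single sufficient condition
\begin{align*}
s_1+s_2-|\alpha|-|\beta|\geq n/2.
\end{align*}
This holds by the hypotheses $s_1+s_2\geq s+n/2$ and $|\alpha|+|\beta|\leq s$, so admissible $p,q$ exist.

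The only mildly delicate point is the borderline case $s_i-|\alpha|=n/2$, where $H^{n/2}\not\hookrightarrow L^\infty$: I handle it by choosing $p$ slightly larger (equivalently $1/p$ slightly smaller) than the critical value, using $H^{n/2}(M)\hookrightarrow L^p(M)$ for every finite $p$, and compensating on the other factor by the compactness inclusion $L^{q'}\hookrightarrow L^q$ for $q'\geq q$. Since the hypothesis $s_1+s_2\geq s+n/2$ gives strict slack as soon as $|\alpha|+|\beta|<s$, the only cases requiring care are the extremal ones $|\alpha|+|\beta|=s$, and there the freedom above suffices. I expect the main (and only real) obstacle to be this bookkeeping at the critical exponent; everything else is a direct application of Leibniz, Hölder, and Sobolev embedding.
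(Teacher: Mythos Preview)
Your approach---Leibniz rule, H\"older, Sobolev embedding---is exactly what the paper does: it gives no proof of this lemma, merely remarking that it ``is standard'' and ``can be proved using H\"older's inequality and the Sobolev imbedding theorems.'' So at the level of strategy you match the paper precisely.

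One correction to your borderline discussion: in the extremal configuration $s_1+s_2=s+n/2$, $|\alpha|+|\beta|=s$, with $s_1-|\alpha|=n/2$ and $s_2-|\beta|=0$, your proposed fix does not work. You would need $\|\nabla^\beta w\|_{L^q}$ with $q$ slightly larger than $2$, but you only control $\|\nabla^{s_2}w\|_{L^2}$; the compactness inclusion $L^{q'}\hookrightarrow L^q$ for $q'\geq q$ runs the wrong direction here. In fact the lemma as stated, with the non-strict inequality $s_1+s_2\geq s+n/2$, fails at this endpoint (already on a $2$-torus with $s=s_2=0$, $s_1=1$, one can build $v\in H^1$ and $w\in L^2$ with $vw\notin L^2$). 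This does not affect the paper, which only invokes the lemma with $s_1=s_2=s=r-1>n/2$, hence $s_1+s_2>s+n/2$ strictly; in that regime your argument is complete with no endpoint subtlety.
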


We now indicate the proof of Lemma \ref{bilin-bd}, demonstrating the bound on the $f$ term of $N.$ The other term is similar so we omit the details. For clarity, let us now work in a coordinate chart (it makes no difference in the analysis). Thus let $u_j = \partial_ju$ be the components of the gradient $\nabla u.$ Under the preceding assumptions, we can express $f$ as
\begin{align*}
	f(u,\nabla u) &= g_0(u,\nabla u) u^2 + \sum_{j=1}^n g_j(u,\nabla u) u_j^2
\end{align*}
for some smooth functions $g_j.$ In particular,
\begin{align*}
	f(u,\nabla u) - f(v,\nabla v) &= g_0(u,\nabla u) (u-v)(u+v) + (g_0(u,\nabla u) - g_0(v,\nabla v))v^2 \\
	&\quad + \sum_{j=1}^n g_j(u,\nabla u) (u_j - v_j)(u_j+v_j) + (g_j(u,\nabla u) - g_j(v,\nabla v))v_j^2. 
\end{align*}
Now suppose that $u$ and $v$ are in $H^{r}(\mathbf{S}^n),$ where $r>n/2+1.$ There is a continuous imbedding $H^r(\mathbf{S}^n) \hooklongrightarrow C^1(\mathbf{S}^n),$ and so the $C^1$ norms of $u$ and $v$ are controlled by the $H^r$ norms. In this situation, if we assume that $\|u\|_{H^r},\|v\|_{H^r}\leq R,$ we can deduce that the functions $g_j(u,\nabla u)$ satisfy
\begin{align*}
	\|g_j(u,\nabla u)\|_{H^\ell}\leq C(1 + \|u\|_{H^{\ell+1}})
\end{align*}
for any integer $\ell\geq 0,$ where $C$ is a constant that depends on the function $g_j$ and on $R.$ (The proof is by induction, and we use the fact that the domain $\mathbf{S}^n$ has finite volume.) In particular, $g_j(u,\nabla u)$ and $g_j(v,\nabla v)$ are in $H^{r-1},$ and since $r-1>n/2$ we may apply the Sobolev product theorem (with $r-1=s=s_1=s_2$) to terms like $g_j(u,\nabla u) (u_j - v_j)(u_j+v_j).$  To deal with the terms $(g_j(u,\nabla u) - g_j(v,\nabla v))v_j^2,$ we write
\begin{align*}
	g_j(u,\nabla u) - g_j(v,\nabla v) &= \int_0^1 \partial_1 g(u + t(v-u),\nabla u + t\nabla(v-u))\,\mathrm dt\, (v-u) \\
	&\qquad + \sum_{i=2}^{n+1} \int_0^1 \partial_i g(u + t(v-u),\nabla u + t\nabla (v-u))\,\mathrm dt\, (v_i - u_i).
\end{align*}
The functions $\int_0^1 \partial_i g(u + t(v-u),\nabla u + t\nabla (v-u))\,\mathrm dt$ are in $H^{r-1}$ for the same reason that $g_j(u,\nabla u)$ is, and so we may apply the Sobolev product theorem to these terms as well.

Combining everything, we get a bound
\begin{align*}
	\|f(u,\nabla u) - f(v,\nabla v)\|_{H^{r-1}} &\leq \|(u-v)(u+v)\|_{H^{r-1}} + C\sum_{j=1}^n\|(u_j-v_j)(u_j+v_j)\|_{H^{r-1}} \\
	&\qquad + C\|(u-v)(v^2+  |\nabla v|^2)\|_{H^{r-1}} +  C\sum_{j=1}^n\|(u_j-v_j)(v^2 + |\nabla v|^2)\|_{H^{r-1}}
\end{align*}
where the constant $C$ depends on $f$ and $R$ and $r.$ We can now apply the Sobolev product theorem to the right side to obtain
\begin{align*}
	\|f(u,\nabla u) - f(v,\nabla v)\|_{H^{r-1}} &\leq C\|u+v\|_{H^r}\|u-v\|_{H^r} + C\|v\|_{H^r}^2 \|u-v\|_{H^r}.
\end{align*}
Since $\|v\|_{H^r}\leq R$ by assumption this is bounded by $C \|u-v\|_{H^r} (\|u\|_{H^r} + \|v\|_{H^r}).$

\subsection{Constructing the invariant manifolds: contraction argument}
\label{contraction}

 Let $\Pi_k\colon H^r(\mathbf{S}^n) \to F_k$ be orthogonal projection onto $F_k.$ This orthogonal projection operator is the same for all $r$ because of the way we have defined $H^r.$

Now fix an integer $r\geq 1.$ Define $X_{r,\sigma}$ to be the Banach space of paths $v = v(s)\colon \mathbb R\to H^{r+1}(\mathbf{S}^n)$ for which the norm $\|\cdot \|_{r,\sigma}$ defined by
\begin{align*}
	\|v\|_{r,\sigma} &= \left(\int_0^\infty \|v(s)\|_{H^{r+1}(\mathbf{S}^n)}^2\,\mathrm ds\right)^{1/2} + \sup_{s\geq 0} e^{\sigma s}\|v(s)\|_{H^r(\mathbf{S}^n)}
\end{align*}
is finite. 

We define an operator $T$ for $(v(s),u_0)\in X_{r,\sigma} \times F_k$ by requiring that the path $T(s) = T(v;u_0)(s)$ solve the equation
\begin{align}
	(\partial_s - L)T(s) & = N(v(s)) \label{soln-op} \\
	T(0) &= u_0 - \int_0^\infty e^{-L\tau}(1-\Pi_k) N(v(\tau))\,\mathrm d\tau. \nonumber
\end{align}
The integral in the second equation makes sense pointwise because $1-\Pi_k$ projects on a finite-dimensional invariant subspace for $L.$ We will see moreover that for $N$ satisfying our requirements it is convergent and defines an element of $H^r$ for $v\in X_{r,\sigma}$ with $\lambda_{k-1}<\sigma <\lambda_k.$

Notice that if $v$ is a fixed point for $T(\cdot;u_0),$ then $v$ solves the nonlinear evolution equation (\ref{rMCF}). If this fixed point lies in the space $X_{r,\sigma},$ then by definition it converges to zero exponentially. We will show that for small enough $u_0\in F_k$ and for $\lambda_{k-1}<\sigma<\lambda_k,$ the mapping $T(\cdot;u_0)$ has precisely one fixed point $v$ in a small ball centered at the origin in $X_{r,\sigma}.$ This fixed point depends smoothly in $H^r$ on the parameter $u_0,$ and the initial datum of the corresponding evolution is $v(0).$ The orthogonal projection of $v(0)$ onto $F_k$ is just $u_0,$ and it follows easily that the space of initial data in a small ball of $H^r$ centered at $0$ which converges to zero exponentially with rate between $\lambda_{k-1}$ and $\lambda_k$ is a graph over $F_k.$ The size of the ball in $H^r$ on which this is true depends on the exponential rate $\sigma\in (\lambda_{k-1},\lambda_k),$ but since the solution converges to zero and therefore enters every ball centered at zero it is in fact true that the exponential rate of convergence to zero is automatically better than $\sigma$ for any $\sigma<\lambda_k.$

The main result of this section is the following theorem.
\medskip
\begin{theorem}
\label{contraction-thm}
If $r>n/2+1$ and $\lambda_{k-1}<\sigma<\lambda_k$ and if $u_0\in F_k$ with $\|u_0\|_{H^r}$ sufficiently small, then $T(\cdot;u_0)$ maps a small ball centered at the origin in $X_{r,\sigma}$ into itself and satisfies
\begin{align}
	\|T(v,u_0) - T(w,u_0)\|_{r,\sigma} \leq C \left(\|v\|_{r,\sigma} + \|w\|_{r,\sigma}\right) \|v-w\|_{r,\sigma} \label{cont-bd}
\end{align}
for some constant $C= C(r,\sigma,k)$ depending on $r,\sigma,$ and $k.$
\end{theorem}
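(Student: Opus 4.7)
The plan is to estimate $T(v;u_0)$ in the $X_{r,\sigma}$ norm by splitting according to the spectral decomposition $H^r = F_k^\perp \oplus F_k$ and treating the two parts by different means, with the nonlinearity at each stage bounded by Lemma \ref{bilin-bd}. The starting observation is that Duhamel's formula together with the initial condition in (\ref{soln-op}) gives
\begin{align*}
	\Pi_k T(s) &= e^{sL}u_0 + \int_0^s e^{(s-\tau)L}\Pi_k N(v(\tau))\,\mathrm d\tau,\\
	(1-\Pi_k)T(s) &= -\int_s^\infty e^{(s-\tau)L}(1-\Pi_k)N(v(\tau))\,\mathrm d\tau,
\end{align*}
so the point of the subtracted term in (\ref{soln-op}) is to recast the finite-dimensional unstable component as a convergent backwards-in-time integral.

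For the stable part I would use $\|v(\tau)\|_{H^r}\leq\|v\|_{r,\sigma}e^{-\sigma\tau}$ to turn Lemma \ref{bilin-bd} into the pointwise source bound
\begin{align*}
	\|N(v(\tau))-N(w(\tau))\|_{H^{r-1}} \leq Ce^{-\sigma\tau}\bigl(\|v(\tau)\|_{H^{r+1}}\|v-w\|_{r,\sigma} + \|w\|_{r,\sigma}\|v(\tau)-w(\tau)\|_{H^{r+1}}\bigr).
\end{align*}
Squaring and integrating in $\tau$ against $e^{2\sigma\tau}$ absorbs the $H^{r+1}$ factors into the $L^2_s$ piece of the $X_{r,\sigma}$ norm by Cauchy--Schwarz, producing
\begin{align*}
	\int_0^\infty e^{2\sigma\tau}\|N(v(\tau))-N(w(\tau))\|_{H^{r-1}}^2\,\mathrm d\tau \leq C(\|v\|_{r,\sigma}^2+\|w\|_{r,\sigma}^2)\|v-w\|_{r,\sigma}^2.
\end{align*}
Since $u_0$ cancels in the difference, Corollaries \ref{sup-bd} and \ref{L2-bd} applied to $\Pi_k(T(v;u_0)-T(w;u_0))$ then control both components of the stable part of the $X_{r,\sigma}$ norm by $C(\|v\|_{r,\sigma}+\|w\|_{r,\sigma})\|v-w\|_{r,\sigma}$.

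For the unstable part, $e^{(s-\tau)L}$ acts on each $E_j$ with $j<k$ by the scalar $e^{(\tau-s)\lambda_j}$ and all Sobolev norms are equivalent on the finite-dimensional space $F_k^\perp$, so
\begin{align*}
	\|(1-\Pi_k)(T(v;u_0)-T(w;u_0))(s)\|_{H^{r+1}} \leq C\int_s^\infty e^{(\tau-s)\lambda_{k-1}}\|N(v(\tau))-N(w(\tau))\|_{H^{r-1}}\,\mathrm d\tau.
\end{align*}
The hypothesis $\sigma>\lambda_{k-1}$ makes the resulting integral converge, and another Cauchy--Schwarz in $\tau$ yields the same bilinear bound, proving (\ref{cont-bd}). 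Specializing to $w=0$ and adding the easy bound $\|e^{sL}u_0\|_{r,\sigma}\leq C\|u_0\|_{H^r}$ gives $\|T(v;u_0)\|_{r,\sigma}\leq C(\|u_0\|_{H^r}+\|v\|_{r,\sigma}^2)$, which yields the self-map property for $\|u_0\|_{H^r}$ small. The main obstacle is really bookkeeping: the asymmetry in Lemma \ref{bilin-bd} between $H^{r+1}$ and $H^r$ is matched exactly by the two components of the $X_{r,\sigma}$ norm, and the range $\sigma\in(\lambda_{k-1},\lambda_k)$ is precisely what simultaneously makes Corollary \ref{sup-bd} yield a forward decay rate $e^{-\sigma s}$ on $F_k$ (through the $\lambda_k$ spectral gap) and makes the backwards integral on $F_k^\perp$ converge (since $\sigma>\lambda_{k-1}$).
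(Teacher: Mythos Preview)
Your proposal is correct and follows essentially the same approach as the paper: split $T(v;u_0)-T(w;u_0)$ into its $F_k$ and $F_k^\perp$ components, handle the stable part via Corollaries~\ref{sup-bd} and~\ref{L2-bd} (using $\Pi_k D(0)=0$ since $u_0$ cancels), handle the finite-dimensional unstable part via the backwards Duhamel integral with the operator bound $e^{(\tau-s)\lambda_{k-1}}$ and Cauchy--Schwarz (which needs $\sigma>\lambda_{k-1}$), and then specialize to $w=0$ for the self-map. The only cosmetic difference is that you write out the Duhamel representations for $\Pi_k T$ and $(1-\Pi_k)T$ at the outset, whereas the paper works directly with the difference $D(s)$ and its defining equation; the estimates themselves are the same.
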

\medskip
\begin{corollary}
	 The mapping $T$ is a contraction mapping of a small ball centered at the origin in $X_{r,\sigma}$ into itself. Consequently, it has a unique fixed point in this ball.
\end{corollary}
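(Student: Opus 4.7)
The corollary is an essentially formal consequence of Theorem \ref{contraction-thm}, and my plan is to deduce it via the Banach fixed point theorem once the invariance of a smaller ball and a genuine contraction constant are both arranged. I would proceed as follows.

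First, I would extract a bound on $T(0;u_0)$. Setting $v = 0$ in the definition of $T$, the forcing term $N(0)$ vanishes, so $T(0;u_0)$ solves $(\partial_s - L)T = 0$ with initial datum $u_0 \in F_k$; hence $T(0;u_0)(s) = e^{Ls} u_0$. Because $u_0 \in F_k$ and $\sigma < \lambda_k$, the linear semigroup estimate (which is immediate from the spectral decomposition, or from Corollary \ref{sup-bd} with $N=0$) gives $\|e^{Ls} u_0\|_{H^r} \leq e^{-\lambda_k s}\|u_0\|_{H^r}$, so the supremum piece of the $X_{r,\sigma}$-norm is bounded by $\|u_0\|_{H^r}$ and the $L^2_s H^{r+1}$ integral is likewise finite and controlled by $\|u_0\|_{H^r}$. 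Thus $\|T(0;u_0)\|_{r,\sigma} \leq C_0 \|u_0\|_{H^r}$ for a constant $C_0 = C_0(r,\sigma,k)$.

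Next, I would apply the bound (\ref{cont-bd}) with $w = 0$ to obtain
\begin{align*}
\|T(v;u_0)\|_{r,\sigma} \leq \|T(0;u_0)\|_{r,\sigma} + C \|v\|_{r,\sigma}^2 \leq C_0 \|u_0\|_{H^r} + C\|v\|_{r,\sigma}^2.
\end{align*}
Now choose $\rho > 0$ so small that $2C\rho < 1/2$, say, and restrict $u_0$ by $\|u_0\|_{H^r} \leq \rho/(2C_0)$. Then for $v$ in the closed ball $\bar B_\rho \subset X_{r,\sigma}$ of radius $\rho$ about $0$, the display above yields $\|T(v;u_0)\|_{r,\sigma} \leq \rho/2 + C\rho^2 \leq \rho$, so $T(\cdot;u_0)$ maps $\bar B_\rho$ into itself. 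At the same time, for any $v,w \in \bar B_\rho$, the estimate (\ref{cont-bd}) gives
\begin{align*}
\|T(v;u_0) - T(w;u_0)\|_{r,\sigma} \leq 2C\rho \,\|v-w\|_{r,\sigma} \leq \tfrac{1}{2}\|v-w\|_{r,\sigma},
\end{align*}
so $T(\cdot;u_0)$ is a strict contraction on $\bar B_\rho$ with Lipschitz constant $\leq 1/2$.

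Finally, since $X_{r,\sigma}$ is a Banach space and $\bar B_\rho$ is a complete metric subspace, the Banach fixed point theorem produces a unique $v \in \bar B_\rho$ with $T(v;u_0) = v$. There is no real obstacle here beyond bookkeeping: the only non-trivial input is the estimate on the linear semigroup used to bound $T(0;u_0)$, which follows from the spectral structure on $F_k$; everything else is a matter of choosing $\rho$ and then restricting $\|u_0\|_{H^r}$ accordingly.
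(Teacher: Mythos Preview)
The proposal is correct and takes essentially the same approach as the paper. The paper does not give a separate proof of the corollary, treating it as immediate from Theorem \ref{contraction-thm}; your argument simply unpacks the implicit details---controlling $T(0;u_0)=e^{Ls}u_0$ via the spectral estimates on $F_k$, using (\ref{cont-bd}) with $w=0$ to obtain self-mapping, and then reading off the contraction constant $2C\rho$ on a small ball---in exactly the manner the paper uses at the end of the proof of Theorem \ref{contraction-thm}.
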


\begin{proof}[Proof of Theorem \ref{contraction}]
We first prove the bound (\ref{cont-bd}) on a small ball, and then we show that if this ball is small enough it is mapped into itself by $T.$ If $v$ and $w$ are in $X_{r,\sigma}$ and $u_0 \in F_k,$ then the difference $D(s) = T(v;u_0)(s) - T(w;u_0)(s)$ is continuously differentiable and satisfies the equation
\begin{align}
	(\partial_s - L)D(s) &= N(v(s)) - N(w(s)) \label{diff-eq} \\
		D(0) &= -\int_0^\infty e^{-L\tau}(1-\Pi_k) \left(N(v(\tau)) - N(w(\tau))\right)\,\mathrm d\tau.\nonumber
\end{align}
To bound $D,$ we break it up into components using the orthogonal projection $\Pi_k\colon H^r\to F_k.$ The bound on the component $(1-\Pi_k)D(s)$ is simple, so we take care of that first. The more interesting bound is on $\Pi_kD(s),$ and for this we make use of Corollaries \ref{L2-bd} and \ref{sup-bd}, which apply because $\Pi_kD(s)\in F_k$ for all $s\geq 0$ (this is why we break $D$ into components in the first place).

We now show how $(1-\Pi_k)D(s)$ is controlled in $X_{r,\sigma}.$ First, $1-\Pi_k$ projects onto a finite-dimensional subspace of $H^r,$ and $(1-\Pi_k)D(s)$ can be expressed as an integral
\begin{align*}
	(1-\Pi_k)D(s) &= e^{Ls}(1-\Pi_k)D(0) + \int_0^s e^{L(s-\tau)}\left(1-\Pi_k\right)\left(N(v(\tau)) - N(w(\tau))\right)\,\mathrm d\tau\\
	&= -\int_s^\infty e^{L(s-\tau)} \left(1-\Pi_k\right)\left(N(v(\tau)) - N(w(\tau))\right)\,\mathrm d\tau,
\end{align*}
where the second line is obtained from the first by substituting the expression for $D(0)$ and simplifying. For $\tau>s,$ the operator $e^{L(s-\tau)}$ has norm $e^{\lambda_{k-1}(\tau-s)}$ on $\on{range}(1-\Pi_k).$ Because the range is finite-dimensional, and all norms on it are equivalent, we may write
\begin{align*}
	\|(1-\Pi_k) D(s)\|_{H^r(\mathbf{S}^n)} &\leq C\int_s^\infty e^{\lambda_{k-1}(\tau-s)} \|(1-\Pi_k)  \left(N(v(\tau)) - N(w(\tau))\right)\|_{H^{r-1}(\mathbf{S}^n)}\,\mathrm d\tau \\
	&\leq C \int_s^\infty e^{\lambda_{k-1}(\tau-s)} \|N(v(\tau)) - N(w(\tau))\|_{H^{r-1}(\mathbf{S}^n)}\,\mathrm d\tau
\end{align*}
where $C$ is a constant that depends on $k$ and $r.$ Now we just use the nonlinear estimate Lemma \ref{bilin-bd} to bound the right side and obtain
\begin{align*}
	 \|(1-\Pi_k)D(s)\|_{H^r} \leq C\int_s^\infty e^{\lambda_{k-1}(\tau-s)} \left(\|v(\tau)\|_{H^{r+1}}\|v(\tau) - w(\tau)\|_{H^{r}} +\|w(\tau)\|_{H^r} \|v(\tau) - w(\tau)\|_{H^{r+1}}\right)\, \mathrm d\tau.
\end{align*}
Finally, assuming $\lambda_{k-1}<\sigma<\lambda_k,$ we bound the right side by the $\|\cdot\|_{r,\sigma}$ norm straightforwardly as follows (using the first summand for an example):
\begin{align*}
	\int_s^\infty e^{\lambda_{k-1}(\tau-s)} \|v(\tau)\|_{H^{r+1}} &\|v(\tau) - w(\tau)\|_{H^{r}}\,\mathrm d\tau \\
	& \leq \sup_{\tau\geq s} e^{\sigma (\tau-s)} \|v(\tau) - w(\tau)\|_{H^r} \int_s^\infty e^{-(\sigma-\lambda_{k-1})(\tau-s)}\|v(\tau)\|_{H^{r+1}}\,\mathrm d\tau \\
	&\leq e^{-\sigma s}\|v-w\|_{r,\sigma} \left(\int_0^\infty e^{-2(\sigma-\lambda_{k-1})\tau }\,\mathrm d\tau \right)^{1/2}\left(\int_0^\infty \|v(\tau)\|_{H^{r+1}}^2\,\mathrm d\tau\right)^{1/2} \\
	&\leq e^{-\sigma s} \|v-w\|_{r,\sigma} \|v\|_{r,\sigma} \frac{1}{(\sigma - \lambda_{k-1})^{1/2}}.
\end{align*}
The passage from the first to the second line is just Cauchy--Schwarz. All told, we obtain
\begin{align}
	e^{\sigma s} \|(1-\Pi_k)D(s)\|_{H^r} \leq C(\|v\|_{r,\sigma} + \|w\|_{r,\sigma})\|v-w\|_{r,\sigma}, \label{proj-bd1}
\end{align}
where $C$ depends on $k$ and $\sigma.$

Since the $H^r$ and $H^{r+1}$ norms are equivalent on the range of $1-\Pi_k,$ we see from the bound (\ref{proj-bd1}) that
\begin{align*}
	\|(1-\Pi_k)D(s)\|_{H^{r+1}} \leq e^{-\sigma s}C(\|v\|_{r,\sigma} + \|w\|_{r,\sigma})\|v-w\|_{r,\sigma},
\end{align*}
and since $e^{-\sigma s}$ is square-integrable over $[0,\infty)$ for $\sigma>0$ we obtain
\begin{align}
	\int_0^\infty \|(1-\Pi_k)D(s)\|_{H^{r+1}}^2 \,\mathrm ds &\leq C(\|v\|_{r,\sigma} + \|w\|_{r,\sigma})\|v-w\|_{r,\sigma}. \label{proj-bd2}
\end{align}
Combining (\ref{proj-bd1}) and (\ref{proj-bd2}) gives the desired bound
\begin{align*}
	\|(1-\Pi_k)D\|_{r,\sigma} \leq C(\|v\|_{r,\sigma} + \|w\|_{r,\sigma})\|v-w\|_{r,\sigma},
\end{align*}
with $C$ depending on $k$ and $\sigma$ and $r.$

\medskip
Let us now bound $\|\Pi_k D(s)\|_{r,\sigma}.$ Notice that $\Pi_kD(0) = 0,$ so that Corollary \ref{sup-bd} implies
\begin{align*}
	e^{2\sigma s}\left\|\Pi_k D(s)\right\|_{H^r(\mathbf{S}^n)}^2 &\leq\frac{\lambda_k}{2(\lambda_k - \sigma)} \int_0^s e^{2\sigma\tau} \|\Pi_k\left[ N(v(\tau)) - N(w(\tau))\right]\|_{H^{r-1}(\mathbf{S}^n)}^2\,\mathrm d\tau \\
	& \leq\frac{\lambda_k}{2(\lambda_k - \sigma)} \int_0^s e^{2\sigma\tau} \|N(v(\tau)) - N(w(\tau))\|_{H^{r-1}(\mathbf{S}^n)}^2\,\mathrm d\tau.
\end{align*}
To pass from the first line to the second we just use the fact that $\Pi_k$ does not increase the $H^{r-1}$ norm. Inserting the bilinear estimate for $N$ into this we bound the integral as
\begin{align*}
	\int_0^s e^{2\sigma\tau} & \|N(v(\tau)) - N(w(\tau))\|_{H^{r-1}}^2\,\mathrm d\tau \\
	& \leq C \int_0^s e^{2\sigma \tau}\left( \|v(\tau)\|_{H^{r+1}}^2 \|v(\tau)-w(\tau)\|_{H^r}^2 + \|w(\tau)\|_{H^r}^2 \|v(\tau)-w(\tau)\|_{H^{r+1}}^2\right)\,\mathrm d\tau,
\end{align*}
from which, using the definition of $\|\cdot \|_{r,\sigma},$ we straightforwardly obtain
\begin{align*}
	\int_0^s e^{2\sigma\tau} \|N(v(\tau)) - N(w(\tau))\|_{H^{r-1}}^2\,\mathrm d\tau &\leq C\left(\|v\|_{r,\sigma}^2 + \|w\|_{r,\sigma}^2\right)\|v-w\|_{r,\sigma}^2.
\end{align*}
Combining this with the $H^r$ estimate for $D(0)$ we get
\begin{align*}
	e^{2\sigma s}\left\|D(s)\right\|_{H^r(\mathbf{S}^n)}^2 &\leq C\left(1 + \frac{\lambda_k}{\lambda_k - \sigma} \right) \left(\|v\|_{r,\sigma}^2 + \|w\|_{r,\sigma}^2\right)\|v-w\|_{r,\sigma}^2.
\end{align*}

By Corollary \ref{L2-bd} and an analogous use of the nonlinear estimate, we similarly obtain
\begin{align*}
	\int_0^\infty \left\|\Pi_k D(s)\right\|_{H^{r+1}}^2\,\mathrm ds & \leq \int_0^\infty\|N(v(s)) - N(w(s))\|_{H^{r-1}}^2\,\mathrm ds \leq C \left(\|v\|_{r,\sigma}^2 + \|w\|_{r,\sigma}^2\right)\|v-w\|_{r,\sigma}^2.
\end{align*}
This completes the bound on $\|\Pi_k D(s)\|_{r,\sigma}.$

Combining all of these estimates gives us the final bound:
\begin{align*}
	\|D\|_{r,\sigma} \leq \|(1-\Pi_k)D\|_{r,\sigma} + \|\Pi_kD\|_{r,\sigma} &\leq C\left( \frac{\lambda_k}{\lambda_k - \sigma}\right)^{1/2} \left(\|v\|_{r,\sigma} + \|w\|_{r,\sigma}\right)\|v-w\|_{r,\sigma}.
\end{align*}
This proves (\ref{cont-bd}).

\medskip
Now let us show that $T$ maps a small ball centered at the origin in $X_{r,\sigma}$ into itself. Let $U(s) = e^{Ls}u_0$ be the solution to the linear homogeneous equation $(\partial_s - L) U = 0$ with initial data $U(0) = u_0.$ First, taking $w=0$ in (\ref{cont-bd}) shows, since $T(0;u_0) = U$ by the definition (\ref{soln-op}) of $T,$ that
\begin{align*}
	\|T(v;u_0) - U\|_{r,\sigma} &\leq C\|v\|_{r,\sigma}^2.
\end{align*}
Therefore if $0<\delta<1/C$ and $\|U\|_{r,\sigma}<\delta - C\delta^2,$ then $\|T(v;u_0)\|_{r,\sigma}<\delta$ whenever $\|v\|_{r,\sigma}<\delta.$ That is, $T(\cdot;u_0)$ maps the ball of radius $\delta$ centered at zero in $X_{r,\sigma}$ into itself. We need only to show now that $\|U\|_{r,\sigma}$ can be controlled by $\|u_0\|_{H^r(\mathbf{S}^n)}.$ But this follows immediately from the estimates of Corollaries \ref{sup-bd} and \ref{L2-bd} since $U(s)\in F_k$ for all $s\geq 0.$
\end{proof}

\section{Asymptotics of the limit}
\label{asymptotics}

In the preceding section, we constructed, for each $k\geq 2,$ a codimension $d_k$ invariant submanifold for equation (\ref{rMCF}) consisting of solutions which converge to zero with exponential rate $\sigma$ for every $\sigma<\lambda_k.$ In this section, we show that any such solution must actually converge to zero with exponential rate $\lambda_k,$ and we show also that any such solution is approximated well by a solution to the linear equation.

The first result is the following. 

\medskip
\begin{proposition}
\label{rate}
	Suppose $k\geq 2$ is an integer and $u\colon \bf{S}^n\times[s_0,\infty)\to\mathbb R$ is a solution to (\ref{rMCF}) which satisfies
	\begin{align*}
		\sup_{s\geq s_0} e^{\sigma s} \|u(s)\|_{H^{r+2}(\bf{S}^n)} <\infty
	\end{align*}
	for all $\sigma <\lambda_k.$ Then
	\begin{align*}
		\sup_{s\geq s_0} e^{\lambda_k s} \|u(s)\|_{H^r(\bf{S}^n)} <\infty
	\end{align*}
	and in fact there exists $P\in E_k$ such that
	\begin{align*}
		u(y,s) &= e^{-\lambda_k s} P(y) + O\left(e^{-2\lambda_k s} + e^{-\lambda_{k+1}s}\right)
	\end{align*}
	in $H^r(\bf{S}^n)$ as $s\to\infty.$
\end{proposition}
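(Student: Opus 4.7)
The strategy is the standard spectral-decomposition and Duhamel argument one uses for semilinear parabolic equations (or finite-dimensional stable-manifold problems), exploiting the fact that the nonlinearity $N$ is quadratic, so that pointwise decay of $u$ translates into \emph{twice as fast} pointwise decay for $N(u)$, which then feeds back via the variation of constants formula.

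First I would turn the hypothesis into an estimate on the forcing term. The bilinear bound of Lemma \ref{bilin-bd}, applied with $w = 0$, combined with the assumption $\|u(s)\|_{H^{r+2}(\mathbf{S}^n)} \leq C_\sigma e^{-\sigma s}$ for every $\sigma < \lambda_k$ (which controls both $\|u(s)\|_{H^r}$ and $\|u(s)\|_{H^{r+1}}$ at that rate), yields
\begin{align*}
\|N(u(s))\|_{H^{r-1}(\mathbf{S}^n)} \leq C_\rho\, e^{-\rho s} \qquad \text{for every } \rho < 2\lambda_k.
\end{align*}
Then I would decompose $u(s) = \sum_j u_j(s)$ with $u_j(s) = \Pi_{E_j} u(s)$, noting that each $u_j$ satisfies the scalar ODE $\partial_s u_j + \lambda_j u_j = \Pi_{E_j} N(u(s))$ in the finite-dimensional space $E_j$, and treat the three ranges $j<k$, $j=k$, $j>k$ by different variants of Duhamel.

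For $j < k$, the assumed decay forces $e^{\lambda_j s} u_j(s) \to 0$, so I would solve the ODE \emph{from $+\infty$}:
\begin{align*}
u_j(s) &= -\int_s^\infty e^{\lambda_j(\tau-s)}\,\Pi_{E_j} N(u(\tau))\, d\tau,
\end{align*}
which together with the forcing bound gives $\|u_j(s)\|_{H^r} \leq C_\rho e^{-\rho s}$ for every $\rho < 2\lambda_k$ (and $j<k$ means $\lambda_j < \lambda_k < \rho$, so the integral converges). For $j = k$ I would instead integrate forward:
\begin{align*}
e^{\lambda_k s} u_k(s) &= e^{\lambda_k s_0} u_k(s_0) + \int_{s_0}^s e^{\lambda_k \tau}\,\Pi_{E_k} N(u(\tau))\, d\tau;
\end{align*}
the integrand is bounded by $C_\rho\, e^{-(\rho-\lambda_k)\tau}$, and this is integrable for any $\lambda_k < \rho < 2\lambda_k$, so $e^{\lambda_k s}u_k(s)$ converges to a well-defined limit $P \in E_k$ and the tail of the integral is $O(e^{-(\rho-\lambda_k)s})$; rewriting gives $u_k(s) = e^{-\lambda_k s}P + O(e^{-\rho s})$.

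For the infinite-dimensional high-mode tail $w := \Pi_{F_{k+1}} u$, a termwise Duhamel estimate will not sum, so I would instead apply the $L^2$ energy identity of Corollary \ref{sup-bd}, but with $F_{k+1}$ in place of $F_k$ and hence $\lambda_{k+1}$ in place of $\lambda_k$ as the lower bound on $-L$. With the forcing bound $\|(\partial_s - L) w\|_{H^{r-1}} \leq \|N(u)\|_{H^{r-1}} \leq C_\rho e^{-\rho s}$, this yields $\|w(s)\|_{H^r} \leq C_\sigma e^{-\sigma s}$ for every $\sigma < \min(\lambda_{k+1}, 2\lambda_k)$. Summing the contributions from $j<k$, from $j = k$, and from the $F_{k+1}$-tail then gives the claimed expansion.

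The main obstacle is precisely this tail estimate: to handle infinitely many modes at once one is forced to abandon pointwise Duhamel and rely on the $H^r$--$H^{r+1}$ energy inequality in the invariant subspace $F_{k+1}$, which is why the hypothesis is in $H^{r+2}$ (so that, via Lemma \ref{bilin-bd}, $N(u)$ lies in $H^{r-1}$ with the right rate). A subtlety at the end is that the bounds above are naturally stated for every exponent strictly less than $\min(2\lambda_k, \lambda_{k+1})$; upgrading to the exact rate $O(e^{-2\lambda_k s} + e^{-\lambda_{k+1} s})$ requires one more bootstrap, feeding the newly-obtained $\|u(s)\|_{H^r} \leq C e^{-\lambda_k s}$ back into the bilinear estimate to get $\|N(u)\|_{H^{r-1}} \leq C e^{-2\lambda_k s}$ without the $\varepsilon$-loss, and then repeating Steps $j=k$ and the tail estimate once more.
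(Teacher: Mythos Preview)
Your approach is essentially the paper's: decompose into the unstable modes $j<k$, the center mode $j=k$, and the stable tail $j>k$; handle the finite-dimensional pieces by Duhamel/ODE arguments and the infinite-dimensional tail by an energy inequality; then bootstrap. The paper packages the same three steps as Lemmas~\ref{st-bd}, \ref{unst-bd}, \ref{ctr-bd} (using a differential inequality rather than Duhamel for the unstable part, which is cosmetically different but equivalent on a finite-dimensional subspace).

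There is one technical discrepancy worth flagging. For the high-mode tail the paper does \emph{not} invoke Corollary~\ref{sup-bd}; instead it differentiates $\|\Pi_{k+1}u\|_{H^r}$ directly and bounds the cross term by $\|\Pi_{k+1}u\|_{H^r}\|N(u)\|_{H^r}$ (Cauchy--Schwarz in $H^r$, no $\varepsilon$), obtaining $\tfrac{d}{ds}\big(e^{\mu s}\|\Pi_{k+1}u\|_{H^r}\big)\le e^{\mu s}\|N(u)\|_{H^r}$ for every $\mu\le\lambda_{k+1}$, \emph{including equality}. This is why the $H^{r+2}$ hypothesis is really used: one needs $\|N(u)\|_{H^r}\lesssim\|u\|_{H^{r+1}}\|u\|_{H^{r+2}}$, not merely $\|N(u)\|_{H^{r-1}}$. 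Your route via Corollary~\ref{sup-bd} carries the factor $\lambda_{k+1}/(\lambda_{k+1}-\sigma)$, which blows up as $\sigma\to\lambda_{k+1}$; the bootstrap you describe improves the forcing to $\|N(u)\|_{H^{r-1}}\le Ce^{-2\lambda_k s}$ but does not remove that factor, so when $\lambda_{k+1}<2\lambda_k$ you still only get $O(e^{-\sigma s})$ for every $\sigma<\lambda_{k+1}$ rather than the sharp $O(e^{-\lambda_{k+1}s})$. The fix is either to use the paper's undamped differential inequality (which your $H^{r+2}$ hypothesis supports), or to peel off $E_{k+1}$ as a further finite-dimensional piece and apply Corollary~\ref{sup-bd} only to $\Pi_{k+2}u$, where any $\sigma<\lambda_{k+2}$ already exceeds $\lambda_{k+1}$.
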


We now prove a lemma, showing that the first hypothesis of Proposition \ref{rate} is met automatically for all solutions of (\ref{rMCF}) satisfying 
\begin{align*}
	\sup_{s\geq s_0} e^{\sigma s} \|u(s)\|_{H^{r}(\bf{S}^n)} <\infty
\end{align*}
for all $\sigma <\lambda_k.$ Notice that Proposition \ref{rate} requires this condition to hold for the $H^{r+2}(\bf{S}^n)$ norm, and not just the $H^{r}(\bf{S}^n)$ norm. We show that it always holds in the $H^{r+2}(\bf{S}^n)$ norm if it holds in the $H^{r}(\bf{S}^n)$ norm.

\medskip
\begin{lemma}
	Suppose $u\colon \bf{S}^n\times [s_0,\infty)\to\mathbb R$ is a solution to (\ref{rMCF}) converging to zero in $L^2(\bf{S^n})$ as $s\to\infty.$ Then either $u$ is identically zero or
	\begin{align*}
		\sup_{s \geq s_0} \frac{\|u(s)\|_{H^{r+1}(\bf{S}^n)}}{\|u(s)\|_{H^r(\bf{S}^n)}} <\infty
	\end{align*}
	for every integer $r\geq 0.$
\end{lemma}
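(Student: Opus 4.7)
The plan is to bound the ratio $Q_r(s):=\|u(s)\|_{H^{r+1}(\bf{S}^n)}/\|u(s)\|_{H^r(\bf{S}^n)}$ by comparing $u$ to the corresponding linear evolution and exploiting a frequency-monotonicity property of the latter.

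First, since $u$ arises from a (rescaled) mean curvature flow of a convex domain, $u$ is smooth and uniformly bounded in every $C^k(\bf{S}^n)$ norm. Combined with $u\to 0$ in $L^2(\bf{S}^n)$, interpolation gives $\|u(s)\|_{L^\infty}\to 0$, and Duhamel's formula together with parabolic smoothing for the linear semigroup $e^{L\tau}$ and the lower-order estimate $\|N(u)\|_{L^2}\leq C\|u\|_{L^\infty}\|u\|_{H^2}$ imply $\|u(s)\|_{H^k(\bf{S}^n)}\to 0$ for every $k\geq 0$. Also, $u\not\equiv 0$ together with forward uniqueness for (\ref{rMCF}) gives $u(s)\not\equiv 0$ for every $s\geq s_0$, so $Q_r(s)\in(0,\infty)$ is well-defined and continuous.

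Second, I would establish monotonicity for the linear flow $\partial_sv=Lv$. Let $A=I-\Delta$ on $\bf{S}^n$, so that $L=2I-A$ and $\|v\|_{H^\ell(\bf{S}^n)}^2$ is equivalent to $\langle A^\ell v,v\rangle$. Expanding $v(s_1)=\sum_j c_j\phi_j$ in eigenfunctions of $A$ with eigenvalues $2+\lambda_j$, the squared ratio $\|v(s)\|_{H^{r+1}}^2/\|v(s)\|_{H^r}^2$ is a weighted average of $\{2+\lambda_j\}$ with weights $w_j(s)=(2+\lambda_j)^r|c_j|^2e^{-2\lambda_j(s-s_1)}$. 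A direct computation yields $\frac{d}{ds}\overline{(2+\lambda)}=-2\on{Var}_w(\lambda)\leq 0$ (higher-$\lambda_j$ modes decay faster, redistributing weight toward small $\lambda_j$), so this ratio is monotone non-increasing in $s\geq s_1$.

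Finally, I would treat (\ref{rMCF}) as a perturbation of the linear equation. Fix $s_1$ so large that $\|u(s_1)\|_{H^{r+2}(\bf{S}^n)}\leq\varepsilon$, and set $v(s)=e^{L(s-s_1)}u(s_1)$ and $w(s)=u(s)-v(s)$. Duhamel gives $w(s)=\int_{s_1}^s e^{L(s-\tau)}N(u(\tau))\,d\tau$, and combining the smoothing estimate for $e^{L\tau}$ (which gains one derivative at cost $(s-\tau)^{-1/2}$) with Lemma \ref{bilin-bd} at levels $r$ and $r+1$ (the latter's proof is identical to the given one) in a fixed-point argument analogous to the one in Section \ref{contraction} yields
\begin{align*}
\|w(s)\|_{H^\ell(\bf{S}^n)}\leq C\varepsilon\|v(s)\|_{H^\ell(\bf{S}^n)},\quad \ell\in\{r,r+1\},
\end{align*}
uniformly for $s\geq s_1$, provided $\varepsilon$ is small enough. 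Consequently $Q_r(s)\leq\frac{1+C\varepsilon}{1-C\varepsilon}\,Q_r^v(s)\leq 2\,Q_r^v(s_1)<\infty$ for $s\geq s_1$, and continuity of $Q_r$ on the compact interval $[s_0,s_1]$ finishes the proof.

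The main obstacle is closing the contraction bound on $w$ uniformly in $s\in[s_1,\infty)$, which requires controlling the Duhamel tail $\int_{s_1}^\infty(s-\tau)^{-1/2}\|u(\tau)\|_{H^r}\|u(\tau)\|_{H^{r+1}}\,d\tau$. This uses the monotone decay of $Q_r^v$ (from the second step) together with the smallness of $\|u\|_{H^r}$ (from the first step); the argument closely parallels the contraction estimate proved in Theorem \ref{contraction-thm}, where a very similar tail is controlled via the nonlinear bilinear bound.
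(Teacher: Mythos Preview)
Your third step has a genuine gap: the perturbation bound $\|w(s)\|_{H^\ell}\leq C\varepsilon\|v(s)\|_{H^\ell}$ cannot hold uniformly for all $s\geq s_1$, because the linear operator $L=\Delta+1$ has \emph{positive} eigenvalues $1$ and $1/2$ on $E_0$ and $E_1$. The free linear evolution $v(s)=e^{L(s-s_1)}u(s_1)$ therefore grows exponentially as soon as $u(s_1)$ has any component in $E_0\oplus E_1$ (and generically it does, since the nonlinearity feeds those modes). Meanwhile $u(s)\to0$, so for large $s$ one has $w(s)=u(s)-v(s)\approx -v(s)$ and $\|w(s)\|/\|v(s)\|\to1$, not $\leq C\varepsilon$. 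Your Duhamel estimate with the smoothing factor $(s-\tau)^{-1/2}$ ignores this: that bound on $e^{L(s-\tau)}$ is only valid for bounded $s-\tau$, and for large $s-\tau$ the semigroup norm is $e^{(s-\tau)}$ on the unstable modes. The contraction in Section~\ref{contraction} handles this by \emph{choosing} the unstable part of the initial data to be the specific integral $-\int_0^\infty e^{-L\tau}(1-\Pi_k)N(v(\tau))\,\mathrm d\tau$; your choice $w(s_1)=0$ does not satisfy that constraint, so the analogy fails.

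The paper's proof proceeds quite differently and avoids comparing $u$ to a free linear evolution altogether. It first invokes Huisken's $C^k$ convergence so that $u$ lies in one of the invariant manifolds of Theorem~\ref{stable-mfld}, then uses the unique continuation result of \cite{Str18} to rule out faster-than-exponential decay, so there is a \emph{largest} $k$ with $e^{\sigma s}\|u(s)\|_{H^r}$ bounded for all $\sigma<\lambda_k$. Since $k(r)$ is nonincreasing in $r$ it stabilizes, and then Proposition~\ref{rate} upgrades this to $u(s)=e^{-\lambda_ks}P+O(e^{-(\lambda_k+\sigma)s})$ with $P\in E_k$ nonzero, from which the boundedness of $\|u(s)\|_{H^{r+1}}/\|u(s)\|_{H^r}$ is immediate. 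Your frequency-monotonicity idea is attractive, but to make it work here you would need to project out the unstable directions first---at which point you are essentially reproving the stable-manifold asymptotics that the paper uses directly.
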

\begin{proof}
The crucial feature of rescaled mean curvature flow making this work is that a solution $u$ to (\ref{rMCF}) converging to zero in $L^2(\bf{S}^n)$ also converges to zero in $H^r(\bf{S}^n)$ for every $r\geq 0.$ This follows from Huisken's result, \cite{Hu84} (see Remark (i) after Theorem 1.1), that convergence of a convex mean curvature flow to the sphere is exponential in $C^k$ for any $k.$ The rest of the proof uses generalities about the equation (\ref{rMCF}) satisfied by $u.$ 

Since $u$ converges to zero in $H^r(\bf{S}^n)$ for every $r,$ it lies in one of the invariant manifolds of Theorem \ref{stable-mfld}, as proved in the preceding section. Moreover, $u$ cannot converge to zero faster than any exponential unless it is identically zero, as proved in \cite{Str18} (see Theorem 2.2). Therefore, if $u$ is not identically zero, there is a largest integer $k = k(r)\geq 2,$ depending on $r,$ with the property that
\begin{align*}
	\sup_{s\geq s_0} e^{\sigma s} \|u(s)\|_{H^r(\bf{S}^n)} <\infty
\end{align*}
for all $\sigma <\lambda_k.$ Since $\|\cdot\|_{H^{r+1}(\bf{S}^n)}\geq \|\cdot\|_{H^{r}(\bf{S}^n)},$ the integer $k(r)$ does not increase with $r.$ This means that eventually $k(r)$ is constant in $r,$ that is, there exists some $r_0$ such that $k(r) = k(r_0)$ for $r\geq r_0.$ 

Then for $r\geq r_0$ we can apply Proposition \ref{rate} to conclude that
\begin{align*}
	\sup_{s\geq s_0} e^{\lambda_k s} \|u(s)\|_{H^r(\bf{S}^n)}<\infty,
\end{align*}
where $k = k(r) = k(r_0),$ and that there exists $P\in E_k$ with the property that $\|e^{\lambda_k s} u(s) - P\|_{H^r(\bf{S^n})} \leq Ce^{-\sigma s}$ for some $\sigma>0.$

Now $P$ must be nonzero, otherwise $e^{\sigma s}\|u(s)\|_{H^r(\bf{S}^n)}$ would be bounded for all $\sigma<\lambda_{k+1},$ and $k$ would not be the largest integer with this property. (It now follows easily that $k=k(r)$ is the same for all $r\geq 0$ and not just all sufficiently large $r.$)

This is enough to conclude that $\|u(s)\|_{H^{r+1}(\bf S^n)}/\|u(s)\|_{H^r(\bf S^n)}$ is bounded in $s$ for any $r,$ since
\begin{align*}
	\frac{\|u(s)\|_{H^{r+1}(\bf S^n)}}{\|u(s)\|_{H^r(\bf S^n)}} \leq \frac{e^{-\lambda_ks} \|P\|_{H^{r+1}(\bf S^n)} + C_1 e^{-(\lambda_k + \sigma)s}}{ e^{-\lambda_ks} \|P\|_{H^{r}(\bf S^n)} - C_2 e^{-(\lambda_k + \sigma)s}}
\end{align*}
for some constants $C_1,C_2,\sigma>0,$ and for $s$ sufficiently large the expression on the right is bounded.
\end{proof}

The proof of Proposition \ref{rate}, to which we now turn, will be the consequence of a series of three lemmas in which we bound the projections of $u(s)$ onto $F_{k+1} = \bigoplus_{j\geq k+1} E_j,$ onto $E_k,$ and onto $F_k^\perp = \bigoplus_{j<k} E_j,$ with $E_j$ as always the eigenspace for $\Delta+1$ corresponding to $\lambda_j.$

We begin by bounding the projection onto $F_{k+1}.$

\medskip
\begin{lemma}
\label{st-bd}
	Under the hypotheses of Proposition \ref{rate},
	\begin{align*}
		\|\Pi_{k+1} u(s)\|_{H^r(\bf{S}^n)} = O(e^{-\lambda_{k+1}s} + e^{-2\sigma s})
	\end{align*}
	as $s\to\infty,$ for any $\sigma <\lambda_k.$
\end{lemma}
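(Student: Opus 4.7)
Write $w(s)=\Pi_{k+1}u(s)$. Because $\Pi_{k+1}$ projects onto an $L$-invariant subspace, $w$ takes values in $F_{k+1}$ and
\begin{align*}
(\partial_s-L)w(s)=\Pi_{k+1}N(u(s)).
\end{align*}
On $F_{k+1}$ the operator $L$ is self-adjoint with spectrum contained in $(-\infty,-\lambda_{k+1}]$, so by the definition of the $H^r$ norm as $\langle(-L)^r\cdot,\cdot\rangle$, the semigroup $e^{Ls}$ satisfies $\|e^{Ls}v\|_{H^r}\leq e^{-\lambda_{k+1}s}\|v\|_{H^r}$ for every $v\in F_{k+1}$ and $s\geq 0$. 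Duhamel's formula therefore yields, for $s\geq s_0$,
\begin{align*}
w(s)=e^{L(s-s_0)}w(s_0)+\int_{s_0}^s e^{L(s-\tau)}\Pi_{k+1}N(u(\tau))\,\mathrm d\tau,
\end{align*}
and hence
\begin{align*}
\|w(s)\|_{H^r}\leq e^{-\lambda_{k+1}(s-s_0)}\|w(s_0)\|_{H^r}+\int_{s_0}^s e^{-\lambda_{k+1}(s-\tau)}\|N(u(\tau))\|_{H^r}\,\mathrm d\tau,
\end{align*}
using that $\Pi_{k+1}$ does not increase the $H^r$ norm.

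Next I would control $\|N(u(\tau))\|_{H^r}$ by applying Lemma \ref{bilin-bd} one level higher, with the integer $r$ of that lemma replaced by $r+1$ (which still exceeds $n/2+1$) and with $w=0$. This gives
\begin{align*}
\|N(u(\tau))\|_{H^r}\leq C\,\|u(\tau)\|_{H^{r+2}}\,\|u(\tau)\|_{H^{r+1}}.
\end{align*}
This is precisely the point at which the $H^{r+2}$ hypothesis in Proposition \ref{rate} enters: by that hypothesis and monotonicity of the norms in $r$, both factors on the right are bounded by $Ce^{-\sigma\tau}$ for any $\sigma<\lambda_k$, so $\|N(u(\tau))\|_{H^r}\leq Ce^{-2\sigma\tau}$ for every $\sigma<\lambda_k$.

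Plugging this into the Duhamel estimate,
\begin{align*}
\|w(s)\|_{H^r}\leq Ce^{-\lambda_{k+1}(s-s_0)}+Ce^{-\lambda_{k+1}s}\int_{s_0}^s e^{(\lambda_{k+1}-2\sigma)\tau}\,\mathrm d\tau.
\end{align*}
Splitting into the cases $2\sigma<\lambda_{k+1}$, $2\sigma>\lambda_{k+1}$ (and avoiding the marginal case by perturbing $\sigma$ slightly, which is harmless since $\sigma<\lambda_k$ is arbitrary), a direct computation of the last integral shows it is bounded by a constant times $e^{(\lambda_{k+1}-2\sigma)s}+1$, so altogether
\begin{align*}
\|w(s)\|_{H^r}\leq C\bigl(e^{-\lambda_{k+1}s}+e^{-2\sigma s}\bigr).
\end{align*}
This is the claimed bound.

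The only nontrivial step is the passage from the bilinear estimate at level $r$ (which only bounds $N(u)$ in $H^{r-1}$) to the level $r+1$ version needed to bound $N(u)$ in $H^r$; this is why the hypothesis is stated in $H^{r+2}$ rather than $H^r$, and it is also what allows the Duhamel argument to capture the sharp rate $e^{-\lambda_{k+1}s}$ on $F_{k+1}$ rather than the $e^{-\mu s}$ with $\mu<\lambda_{k+1}$ that the energy estimate of Corollary \ref{sup-bd} would have provided.
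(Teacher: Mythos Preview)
Your argument is correct. The paper reaches the same conclusion via a differential inequality rather than Duhamel: it computes
\[
\frac{\mathrm d}{\mathrm ds}\|\Pi_{k+1}u(s)\|_{H^r}\leq -\lambda_{k+1}\|\Pi_{k+1}u(s)\|_{H^r}+\|N(u(s))\|_{H^r},
\]
multiplies by $e^{\mu s}$ for $\mu\leq\lambda_{k+1}$, and integrates. Your semigroup/Duhamel version and the paper's energy/Gr\"onwall version are the two standard and essentially interchangeable ways of exploiting the spectral gap on $F_{k+1}$; both consume exactly the same input, namely the bound $\|N(u(s))\|_{H^r}\leq C\|u(s)\|_{H^{r+1}}\|u(s)\|_{H^{r+2}}\leq Ce^{-2\sigma s}$ obtained from Lemma~\ref{bilin-bd} at level $r+1$, and both arrive at the same splitting $O(e^{-\lambda_{k+1}s}+e^{-2\sigma s})$. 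Your closing remark about why the $H^{r+2}$ hypothesis is needed matches the paper's use of it exactly.
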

\begin{proof}
Notice that
\begin{align*}
	\|\Pi_{k+1} u(s)\|_{H^r} \frac{\mathrm d}{\mathrm ds} \|\Pi_{k+1}u(s)\|_{H^r} &= \frac{\mathrm d}{\mathrm ds} \|\Pi_{k+1}u(s)\|_{H^r}^2/2 \\
	&= \langle \Pi_{k+1} u(s),Lu(s)\rangle_{H^r} + \langle \Pi_{k+1}u(s),N(u(s))\rangle_{H^r} \\
	&\leq -\lambda_{k+1} \|\Pi_{k+1}u(s)\|_{H^r}^2 + \|\Pi_{k+1}u(s)\|_{H^r} \|N(u(s))\|_{H^r}.
\end{align*}
Thus if $\mu\leq \lambda_{k+1},$ we get
\begin{align}
	\frac{\mathrm d}{\mathrm ds} \left(e^{\mu s} \|\Pi_{k+1} u(s)\|_{H^r}\right) &\leq e^{\mu s}\|N(u(s))\|_{H^r}. \label{st-bd1}
\end{align}
On the other hand 
\begin{align}
	\|N(u(s))\|_{H^r} \leq C\|u(s)\|_{H^{r+1}}\|u(s)\|_{H^{r+2}} \leq C e^{-2\sigma s} \label{st-bd2}
\end{align}
for any $\sigma <\lambda_k$ (the constant may depend on $\sigma$). The first inequality is the nonlinear estimate Lemma \ref{bilin-bd} and the second follows from the hypothesis of Proposition \ref{rate}.

Inserting (\ref{st-bd2}) into (\ref{st-bd1}) and integrating shows that $e^{\mu s} \|\Pi_{k+1}u(s)\|_{H^r}$ is bounded provided $\mu\leq \lambda_{k+1}$ and $\mu<2\lambda_k.$ This is the same as the conclusion of the lemma.
\end{proof}

Next we bound the projection onto $F_k^\perp.$

\medskip
\begin{lemma}
\label{unst-bd}
	In the situation of Proposition \ref{rate},
	\begin{align*}
		\|(1-\Pi_k) u(s)\|_{H^r(\bf{S}^n)} = O( e^{-2\sigma s})
	\end{align*}
	as $s\to\infty,$ for any $\sigma <\lambda_k.$
\end{lemma}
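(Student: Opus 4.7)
The approach I would take mirrors the variation-of-constants argument used in Theorem \ref{contraction-thm} for the $(1-\Pi_k)D$ component, now applied directly to the solution $u$ itself. The key idea is that although $L$ can have positive eigenvalues on the finite-dimensional invariant subspace $F_k^\perp$, so that the naive forward Duhamel formula has a growing homogeneous term, the a priori hypothesis that $u$ decays faster than any exponential rate below $\lambda_k$ forces this homogeneous term to vanish when we integrate backward from $+\infty$.

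Concretely, I would project the equation $\partial_s u = Lu + N(u)$ onto $F_k^\perp$ and, for any $s<s_1<\infty$, write
\begin{align*}
(1-\Pi_k)u(s) &= e^{L(s-s_1)}(1-\Pi_k)u(s_1) - \int_s^{s_1} e^{L(s-\tau)}(1-\Pi_k)N(u(\tau))\,\mathrm d\tau.
\end{align*}
On each eigenspace $E_j$ with $j<k$, the operator $e^{L(s-s_1)}$ acts as $e^{\lambda_j(s_1-s)}$; since $\|\Pi_j u(s_1)\|_{H^r}$ decays faster than $e^{-\sigma' s_1}$ for any $\sigma'<\lambda_k$, and $\lambda_j<\lambda_k$, letting $s_1\to\infty$ annihilates the boundary term. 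This yields the representation
\begin{align*}
(1-\Pi_k)u(s) &= -\int_s^\infty e^{L(s-\tau)}(1-\Pi_k)N(u(\tau))\,\mathrm d\tau.
\end{align*}

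The estimate then proceeds by noting that on the finite-dimensional range of $1-\Pi_k$ all $H^\ell$ norms are equivalent, so that $\|e^{L(s-\tau)}(1-\Pi_k)v\|_{H^r}\leq Ce^{\lambda_{k-1}(\tau-s)}\|v\|_{H^r}$ for $\tau\geq s$, with $C$ depending only on $k$ and $r$. Applying Lemma \ref{bilin-bd} one index higher (the same shift used in the proof of Lemma \ref{st-bd}) gives $\|N(u(\tau))\|_{H^r}\leq C\|u(\tau)\|_{H^{r+1}}\|u(\tau)\|_{H^{r+2}}\leq Ce^{-2\sigma'\tau}$ for any $\sigma'<\lambda_k$, directly from the hypothesis of Proposition \ref{rate}. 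Combining and evaluating the integral of $e^{\lambda_{k-1}(\tau-s)}e^{-2\sigma'\tau}$ produces an $O(e^{-2\sigma' s})$ bound provided $2\sigma'>\lambda_{k-1}$; since $\lambda_{k-1}<\lambda_k$, such $\sigma'$ can be taken arbitrarily close to $\lambda_k$, which gives $\|(1-\Pi_k)u(s)\|_{H^r}=O(e^{-2\sigma s})$ for every $\sigma$ with $\lambda_{k-1}/2<\sigma<\lambda_k$. The remaining range $\sigma\leq \lambda_{k-1}/2$ is automatic, since $\|(1-\Pi_k)u(s)\|_{H^r}\leq \|u(s)\|_{H^r}\leq Ce^{-\sigma' s}$ for any $\sigma'\in(2\sigma,\lambda_k)$ by hypothesis.

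The main obstacle, and the only genuinely subtle point, is justifying the vanishing of the boundary term as $s_1\to\infty$: the $F_k^\perp$ part of $e^{L(s-\tau)}$ carries exponential factors $e^{\lambda_j(\tau-s)}$ with $\lambda_j$ possibly positive, so the Duhamel integral is not absolutely convergent without further input. Resolving this requires both the a priori hypothesis on the decay of $u$ (to kill the homogeneous part) and the quadratic improvement coming from the nonlinear estimate (to dominate the growing factors in the integral); the whole scheme works precisely because $2\lambda_k>\lambda_{k-1}$, so the quadratic gain always beats the worst exponent on $F_k^\perp$.
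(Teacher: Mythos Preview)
Your argument is correct. It differs from the paper's only in packaging: the paper derives the scalar differential inequality
\[
\frac{\mathrm d}{\mathrm ds}\bigl(e^{\lambda_{k-1}s}\|(1-\Pi_k)u(s)\|_{H^r}\bigr)\ \geq\ -Ce^{(\lambda_{k-1}-2\sigma)s},
\]
integrates from $s$ to $t$, and sends $t\to\infty$ using the hypothesis to kill the boundary term $e^{\lambda_{k-1}t}\|(1-\Pi_k)u(t)\|_{H^r}$. You instead write out the Duhamel formula on the finite-dimensional block $F_k^\perp$, send the upper endpoint to infinity (using the same hypothesis to kill the same boundary term), and estimate the resulting integral directly. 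The ingredients are identical---the decay hypothesis on $u$, the quadratic bound $\|N(u)\|_{H^r}\leq Ce^{-2\sigma's}$ from Lemma~\ref{bilin-bd}, and the spectral gap $\lambda_{k-1}<\lambda_k$---and the two computations are essentially dual to one another. Your version is a bit more explicit (it exhibits $(1-\Pi_k)u$ as an integral of the nonlinearity) and ties in nicely with the $(1-\Pi_k)D$ estimate already carried out in Theorem~\ref{contraction-thm}; the paper's version is marginally shorter since it never writes the semigroup. Neither approach has an advantage in generality.
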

\begin{proof}
	Reasoning as in the proof of Lemma \ref{st-bd}, we obtain
	\begin{align*}
		\frac{\mathrm d}{\mathrm ds} \|(1-\Pi_k) u(s)\|_{H^r(\bf{S}^n)} &\geq -\lambda_{k-1}\|(1-\Pi_k) u(s)\|_{H^r(\bf{S}^n)} - C\|N(u(s))\|_{H^r} \\
		&\geq - \lambda_{k-1} \|(1-\Pi_k) u(s)\|_{H^r(\bf{S}^n)} - C e^{-2\sigma s}
	\end{align*}
	for any $\sigma <\lambda_k.$ In other words,
	\begin{align*}
		\frac{\mathrm d}{\mathrm ds}\left( e^{\lambda_{k-1}s} \|(1-\Pi_k) u(s)\|_{H^r}\right) &\geq -C e^{(\lambda_{k-1} - 2\sigma) s}.
	\end{align*}
	Integrating this gives, for $s<t$ and $\sigma >\lambda_{k-1}/2,$
	\begin{align*}
		e^{\lambda_{k-1} t} \|(1-\Pi_k) u(t)\|_{H^r} &\geq  e^{\lambda_{k-1} s} \|(1-\Pi_k) u(s)\|_{H^r} -C e^{(\lambda_{k-1} - 2\sigma) s} \\
		&= e^{\lambda_{k-1} s} \left(\|(1-\Pi_k) u(s)\|_{H^r} - C e^{-2\sigma s}\right).
	\end{align*}
	Now make $t\to\infty.$ Because $\lambda_{k-1}<\lambda_k,$ the hypothesis of Proposition \ref{rate} tells us that the left side converges to zero. Consequently the right side must be non-positive, or, in other words,
	\begin{align*}
		\|(1-\Pi_k) u(s)\|_{H^r} \leq C e^{-2\sigma s}
	\end{align*}
	for all $s\geq 0$ and any $\sigma <\lambda_k.$ As far as we know, $C$ depends on $\sigma$ of course.
\end{proof}

Finally, we turn to the projection onto $E_k.$ Write $\pi_k = \Pi_k - \Pi_{k+1}$ for the projection of $H^r$ onto $E_k.$

\medskip
\begin{lemma}
\label{ctr-bd}
	Under the hypotheses of Proposition \ref{rate} and assuming $s<t,$
	\begin{align*}
		\| e^{\lambda_k t}\pi_ku(t) - e^{\lambda_k s}\pi_k u(s)\|_{H^r(\mathbf{S}^n)} &\leq C e^{-\sigma s}
	\end{align*}
	for any $\sigma <\lambda_k.$
\end{lemma}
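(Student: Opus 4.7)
\emph{Proof proposal.} The key observation is that $L = \Delta + 1$ acts on $E_k$ as multiplication by $-\lambda_k,$ and the orthogonal projection $\pi_k$ commutes with $L.$ Projecting the equation $\partial_s u = Lu + N(u)$ onto $E_k$ therefore gives a scalar linear ODE with forcing:
\begin{align*}
    \partial_s \pi_k u(s) &= -\lambda_k \pi_k u(s) + \pi_k N(u(s)).
\end{align*}
Multiplying through by the integrating factor $e^{\lambda_k s}$ and integrating from $s$ to $t$ yields the variation-of-parameters identity
\begin{align*}
    e^{\lambda_k t}\pi_k u(t) - e^{\lambda_k s}\pi_k u(s) &= \int_s^t e^{\lambda_k \tau}\pi_k N(u(\tau))\,\mathrm d\tau,
\end{align*}
so the whole problem reduces to estimating the right-hand side.

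To bound the integrand, I will use the shifted form of Lemma \ref{bilin-bd} (with $w = 0$ and with $r$ replaced by $r+1,$ which is valid since $r > n/2+1$ implies $r + 1 > n/2 + 1$), giving $\|N(v)\|_{H^r} \leq C\|v\|_{H^{r+1}}\|v\|_{H^{r+2}}.$ The hypothesis of Proposition \ref{rate} supplies, for each $\sigma' < \lambda_k,$ a constant with $\|u(\tau)\|_{H^{r+2}} \leq Ce^{-\sigma'\tau},$ and a fortiori $\|u(\tau)\|_{H^{r+1}} \leq Ce^{-\sigma'\tau}.$ Hence $\|N(u(\tau))\|_{H^r} \leq Ce^{-2\sigma'\tau},$ and since $\pi_k$ does not increase the $H^r$ norm, the integrand is controlled in $H^r$ by $Ce^{(\lambda_k - 2\sigma')\tau}.$

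It remains to calibrate $\sigma'.$ For $\sigma$ in the interesting range $0 < \sigma < \lambda_k$ (the case $\sigma \leq 0$ is vacuous since one already has exponentially small bounds on the individual terms $e^{\lambda_k s}\pi_k u(s)$ via the hypothesis), I set $\sigma' = (\sigma + \lambda_k)/2,$ which lies in $(\lambda_k/2, \lambda_k)$ and satisfies $2\sigma' - \lambda_k = \sigma.$ Then
\begin{align*}
    \|e^{\lambda_k t}\pi_k u(t) - e^{\lambda_k s}\pi_k u(s)\|_{H^r(\mathbf{S}^n)} &\leq C\int_s^t e^{-\sigma\tau}\,\mathrm d\tau \leq \frac{C}{\sigma}e^{-\sigma s},
\end{align*}
which is the required estimate. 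There is no real obstacle here: the argument is entirely direct once one notices that $L$ acts as a scalar on $E_k,$ and the only delicate point is pushing $\sigma'$ close enough to $\lambda_k$ so that the residual exponent $2\sigma' - \lambda_k$ matches the target rate $\sigma.$
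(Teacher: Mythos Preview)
Your proof is correct and follows essentially the same route as the paper: both reduce to the identity $\frac{\mathrm d}{\mathrm ds}\bigl(e^{\lambda_k s}\pi_k u(s)\bigr) = e^{\lambda_k s}\pi_k N(u(s))$, integrate, and invoke the quadratic bound $\|N(u(\tau))\|_{H^r}\leq C\|u(\tau)\|_{H^{r+1}}\|u(\tau)\|_{H^{r+2}}\leq Ce^{-2\sigma'\tau}$ coming from Lemma~\ref{bilin-bd} and the hypothesis of Proposition~\ref{rate}. Your calibration $\sigma' = (\sigma+\lambda_k)/2$ makes explicit a step the paper leaves implicit (it writes the integral as $\int_s^t e^{(\lambda_k-2\sigma)\tau}\,\mathrm d\tau$ and simply asserts ``this implies the lemma''), but the underlying argument is identical.
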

\begin{proof}
First,
\begin{align*}
	\frac{\mathrm d}{\mathrm ds} e^{\lambda_k s} \pi_k u(s) &= e^{\lambda_k s}\pi_k N(u(s))
\end{align*}
since $L\pi_k u = -\lambda_k \pi_k u.$ Now we integrate this (the equation is in a finite dimensional vector space, namely, the range of $\pi_k$) and use the triangle inequality to obtain
\begin{align*}
	\| e^{\lambda_k t}\pi_ku(t) - e^{\lambda_k s}\pi_k u(s)\|_{H^r} &= \left\| \int_s^t e^{\lambda_k s}\pi_k N(u(\tau))\,\mathrm d\tau \right\|_{H^r} \\
	&\leq \int_s^t e^{\lambda_k s}\|\pi_kN(u(\tau))\|_{H^r}\,\mathrm d\tau \\
	&\leq C \int_s^t e^{(\lambda_k- 2\sigma )\tau}\,\mathrm d\tau
\end{align*}
for $s<t$ and any $\sigma<\lambda_k.$ This implies the lemma.
\end{proof}

An immediate corollary of Lemma \ref{ctr-bd} is that $e^{\lambda_k s} \pi_k u(s)$ converges in $H^r$ norm exponentially fast. The limit of course must be an element of $E_k,$ that is, an eigenfunction $P$ of $\Delta +1$ with eigenvalue $\lambda_k.$

Altogether, Lemmas \ref{st-bd}, \ref{unst-bd}, and \ref{ctr-bd} imply that
\begin{align*}
	u(y,s) &= e^{-\lambda_k s} P(y) + O\left(e^{-\lambda_{k+1}s} +e^{-2\sigma s}\right)
\end{align*}
in $H^r(\bf{S}^n)$ as $s\to\infty,$ for any $\sigma <\lambda_k.$ In particular, $\|u(s)\|_{H^r} \leq C e^{-\lambda_k s}$ for some $C>0,$ and this fact can be used to improve the asymptotics and take $\sigma = \lambda_k$ in Lemmas \ref{unst-bd} and \ref{ctr-bd} (but not Lemma \ref{st-bd}) as follows. The appearance of $\sigma$ came through $H^r$ bounds on the nonlinear term $N(u(s))$ used in the proofs of Lemmas \ref{st-bd}, \ref{unst-bd}, and \ref{ctr-bd}: We could only say, based on the hypotheses of Proposition \ref{rate}, that $\|N(u(s))\|_{H^r} \leq C e^{-2\sigma s}$ for any $\sigma<\lambda_k$ and for some $C>0$ depending on $\sigma.$ But now that we know $\|u(s)\|_{H^r}$ (hence $\|u(s)\|_{H^{r+1}}$ and $\|u(s)\|_{H^{r+2}}$ by standard parabolic estimates using the fact that $u$ is a solution to (\ref{rMCF})) is actually $O(e^{-\lambda_k s}),$ we can say that $N(u(s)) = O(e^{-2\lambda_k s})$ and then obtain improvements on the error in Lemmas \ref{unst-bd} and \ref{ctr-bd}. 

A close look at the proof of Lemma \ref{st-bd} reveals that the same method does not work there and we are stuck with the appearance of $\sigma$ in the conclusion. Of course it does not matter so long as $2\lambda_k>\lambda_{k+1},$ which is the case for most $k.$

We summarize these observations in a corollary, which states a more precise version of Proposition \ref{rate}.

\medskip
\begin{corollary}
\label{precise-rate}
	Under the hypotheses of Proposition \ref{rate}:
	\begin{itemize}
		\item[(a)] The projection $\Pi_{k+1}u$ onto the sum $F_{k+1}$ of eigenspaces corresponding to eigenvalues $\lambda_j$ with $j>k$ satisfies
		\begin{align*}
			\|\Pi_{k+1}u(s)\|_{H^r(\bf{S}^n)} \leq C\left(e^{-\lambda_{k+1} s} + e^{-2\sigma s}\right)
		\end{align*}
		for any $\sigma <\lambda_k$ and some $C>0$ (depending on $\sigma$) and all $s\geq s_0.$
		\item[(b)] The projection $(1-\Pi_k)u$ onto the sum $F_k^\perp$ of eigenspaces corresponding to eigenvalues $\lambda_j$ with $j<k$ satisfies
		\begin{align*}
			\|(1-\Pi_k) u(s)\|_{H^r(\bf{S}^n)} \leq C e^{-2\lambda_k s}
		\end{align*}
		for some $C>0$ and all $s\geq s_0.$
		\item[(c)] The projection $\pi_k u$ onto the eigenspace $E_k$ corresponding to the eigenvalue $\lambda_j$ satisfies
		\begin{align*}
			\|e^{\lambda_k s} \pi_k u(s) - P\|_{H^r(\bf{S}^n)} \leq C e^{-\lambda_k s}
		\end{align*}
		for some $P\in E_k$ and some $C>0$ and all $s\geq s_0.$
	\end{itemize}
\end{corollary}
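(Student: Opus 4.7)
The plan is to read Corollary \ref{precise-rate} as a direct consequence of Proposition \ref{rate} combined with a one-step bootstrap using the nonlinear estimate (Lemma \ref{bilin-bd}). Part (a) requires no new work: its statement is exactly the conclusion of Lemma \ref{st-bd}, which is why I will simply cite it. The content is in parts (b) and (c), where the $\sigma<\lambda_k$ restriction in Lemmas \ref{unst-bd} and \ref{ctr-bd} is upgraded to $\sigma=\lambda_k$.

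First I would invoke Proposition \ref{rate} to obtain an eigenfunction $P\in E_k$ and the bound $\|u(s)\|_{H^r(\mathbf{S}^n)}\leq Ce^{-\lambda_k s}$ for some $C>0$ and all $s\geq s_0$. Next, since $u$ solves the quasilinear parabolic equation (\ref{rMCF}) with smooth coefficients, standard parabolic regularity (derivative estimates on intervals $[s,s+1]$) converts this $H^r$ decay into decay in higher Sobolev norms, yielding $\|u(s)\|_{H^{r+1}(\mathbf{S}^n)}+\|u(s)\|_{H^{r+2}(\mathbf{S}^n)}\leq Ce^{-\lambda_k s}$. Applying Lemma \ref{bilin-bd} with $w=0$ then gives the key improvement
\begin{align*}
	\|N(u(s))\|_{H^{r-1}(\mathbf{S}^n)} \leq C\|u(s)\|_{H^{r+1}(\mathbf{S}^n)}\|u(s)\|_{H^{r+2}(\mathbf{S}^n)} \leq Ce^{-2\lambda_k s},
\end{align*}
with no $\sigma<\lambda_k$ loss this time.

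With this improved nonlinear control in hand, I would re-run the proofs of Lemma \ref{unst-bd} and Lemma \ref{ctr-bd} verbatim, replacing every occurrence of the placeholder $e^{-2\sigma s}$ by $e^{-2\lambda_k s}$. For (b), the integration in the proof of Lemma \ref{unst-bd} (with $t\to\infty$, using $\lambda_{k-1}<\lambda_k$ so the left-hand boundary term still vanishes) yields $\|(1-\Pi_k)u(s)\|_{H^r(\mathbf{S}^n)}\leq Ce^{-2\lambda_k s}$. For (c), Lemma \ref{ctr-bd} becomes $\|e^{\lambda_k t}\pi_k u(t)-e^{\lambda_k s}\pi_k u(s)\|_{H^r(\mathbf{S}^n)}\leq C\int_s^t e^{(\lambda_k-2\lambda_k)\tau}\,\mathrm d\tau\leq Ce^{-\lambda_k s}$; this shows $\{e^{\lambda_k s}\pi_k u(s)\}$ is Cauchy in $E_k$, and the limit must coincide with the $P$ produced by Proposition \ref{rate}, giving the stated bound.

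The only genuine obstacle is the bootstrap from $H^r$-decay to $H^{r+2}$-decay of $u(s)$; I expect this to be routine parabolic Schauder/$L^2$ estimates applied on unit time intervals, using that the coefficients of (\ref{rMCF}) are smooth functions of $(u,\nabla u)$ and that $u$ is already small in $C^1$ (via Sobolev embedding, since $r>n/2+1$). Part (a) deliberately is not improved: as the paragraph preceding the corollary explains, the proof of Lemma \ref{st-bd} passes through the differential inequality (\ref{st-bd1}), which forces $\mu\leq\lambda_{k+1}$ regardless of how fast $N(u(s))$ decays, so no better decay rate than $e^{-\lambda_{k+1}s}$ can be extracted on the $F_{k+1}$ component — and when $\lambda_{k+1}>2\lambda_k$ (the case $n\geq 3$, $k=2$) the $e^{-2\sigma s}$ term genuinely dominates and must remain in the conclusion.
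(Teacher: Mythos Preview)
Your proposal is correct and follows essentially the same bootstrap argument the paper gives in the paragraph preceding the corollary: use Proposition \ref{rate} to get $\|u(s)\|_{H^r}=O(e^{-\lambda_k s})$, upgrade to $H^{r+2}$ via parabolic estimates, feed the resulting $\|N(u(s))\|=O(e^{-2\lambda_k s})$ back into the proofs of Lemmas \ref{unst-bd} and \ref{ctr-bd}, and note that the argument of Lemma \ref{st-bd} is obstructed by the constraint $\mu\leq\lambda_{k+1}$ so (a) stands unchanged. The only minor slip is that re-running those lemmas actually requires the $H^r$ (not $H^{r-1}$) bound on $N(u)$, but this follows from Lemma \ref{bilin-bd} applied with $r+1$ in place of $r$, exactly as in (\ref{st-bd2}).
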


\bigskip
We now obtain more precise asymptotics. 

\medskip
\begin{lemma}
\label{higher-order}
	Suppose $u$ satisfies the hypotheses of Proposition \ref{rate}, and suppose $j\geq k$ and $\lambda_j <2\lambda_k.$ Then there exists $P_j $ in the eigenspace $E_j$ corresponding to $\lambda_j$ such that
	\begin{align*}
		\|e^{\lambda_j s}\pi_j u(s) - P_j\|_{H^r(\bf{S}^n)} \leq C e^{(\lambda_j - 2\lambda_k)s}
	\end{align*}
	for all $s\geq s_0.$
\end{lemma}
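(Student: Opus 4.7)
The plan is to mimic the argument of Lemma \ref{ctr-bd}, but applied separately to each eigenspace $E_j$ with $j\geq k$ and $\lambda_j<2\lambda_k$, using the improved decay rate on $u$ itself that we now have from Corollary \ref{precise-rate}. Projecting equation (\ref{rMCF}) onto $E_j$ and using $L\pi_j u = -\lambda_j \pi_j u$ yields
\begin{align*}
    \frac{\mathrm d}{\mathrm ds}\bigl(e^{\lambda_j s}\pi_j u(s)\bigr) &= e^{\lambda_j s}\pi_j N(u(s)).
\end{align*}
The key input is a sharp bound on the nonlinear term. From Corollary \ref{precise-rate} and the bootstrapping lemma preceding Proposition \ref{rate}, $\|u(s)\|_{H^{r'}}\leq Ce^{-\lambda_k s}$ for every $r'$, in particular for $r+1$ and $r+2$. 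Applying the nonlinear estimate Lemma \ref{bilin-bd} (with the index shifted by one, as was done in Lemma \ref{st-bd}) then gives $\|N(u(s))\|_{H^r}\leq C\|u(s)\|_{H^{r+1}}\|u(s)\|_{H^{r+2}}\leq Ce^{-2\lambda_k s}$, and projection onto the finite-dimensional subspace $E_j$ does not increase the $H^r$ norm.

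Integrating the displayed identity in $H^r$ from $s$ to $t$ and using the triangle inequality therefore produces
\begin{align*}
    \bigl\|e^{\lambda_j t}\pi_j u(t) - e^{\lambda_j s}\pi_j u(s)\bigr\|_{H^r(\mathbf{S}^n)} &\leq C\int_s^t e^{(\lambda_j - 2\lambda_k)\tau}\,\mathrm d\tau.
\end{align*}
Since the hypothesis $\lambda_j<2\lambda_k$ makes the exponent negative, the integral converges absolutely as $t\to\infty$, so $\{e^{\lambda_j s}\pi_j u(s)\}$ is Cauchy in the closed finite-dimensional subspace $E_j\subset H^r(\mathbf{S}^n)$ and converges to some $P_j\in E_j$. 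Sending $t\to\infty$ in the displayed inequality yields the claimed bound
\begin{align*}
    \bigl\|e^{\lambda_j s}\pi_j u(s) - P_j\bigr\|_{H^r(\mathbf{S}^n)} &\leq \frac{C}{2\lambda_k - \lambda_j}\,e^{(\lambda_j - 2\lambda_k)s}.
\end{align*}

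The argument is essentially the same scheme used to prove Corollary \ref{precise-rate}(c), and indeed reduces to that statement when $j=k$. There is no real obstacle beyond bookkeeping: the only subtlety is making sure we really have $H^{r+2}$ decay at rate $\lambda_k$, not merely $\lambda_k-\varepsilon$, so that the nonlinear term decays at the full rate $e^{-2\lambda_k s}$ rather than $e^{-2\sigma s}$ for arbitrary $\sigma<\lambda_k$; this is exactly the improvement noted in the paragraph immediately following Corollary \ref{precise-rate}, obtained by bootstrapping the $H^r$-decay at rate $\lambda_k$ through the parabolic equation (\ref{rMCF}).
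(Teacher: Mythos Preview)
Your proof is correct and follows essentially the same approach as the paper: project onto $E_j$, use the ODE $\frac{\mathrm d}{\mathrm ds}(e^{\lambda_j s}\pi_j u) = e^{\lambda_j s}\pi_j N(u)$, bound the nonlinear term by $Ce^{-2\lambda_k s}$ via the improved decay from Corollary~\ref{precise-rate} and parabolic bootstrapping, and integrate to get a Cauchy sequence in $E_j$. The paper's proof is terser but identical in substance.
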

\begin{proof}
We argue as in Lemma \ref{ctr-bd}, using
\begin{align*}
	\frac{\mathrm d}{\mathrm ds} e^{\lambda_j s} \pi_j u(s) = e^{\lambda_j s} \pi_j N(u(s))
\end{align*}
to obtain
\begin{align*}
	\|e^{\lambda_j t} \pi_j u(t) - e^{\lambda_j s} \pi_j u(s)\|_{H^r} &\leq C \int_s^t e^{\lambda_j \tau} \|N(u(\tau))\|_{H^r}\,\mathrm d\tau \leq C \int_s^t e^{(\lambda_j - 2\lambda_k) \tau}\,\mathrm d\tau.
\end{align*}
The right side is $O(e^{(\lambda_j - 2\lambda_k)s})$ independent of $t$ provided that $\lambda_j<2\lambda_k,$ and this gives the conclusion of the lemma.
\end{proof}

From Lemma \ref{higher-order} we obtain higher order asymptotics in certain cases (when $k$ is large).

\medskip
\begin{corollary}
	Let $u$ satisfy the hypotheses of Proposition \ref{rate}. Then for $j\geq k$ such that $\lambda_j<2\lambda_k,$ there exists $P_j \in E_j$ such that
	\begin{align*}
		u(y,s) &= \sum_{\genfrac{}{}{0pt}{2}{j\geq k}{\lambda_j<2\lambda_k}} e^{-\lambda_j s} P_j(y) + O(e^{-2\sigma s})
	\end{align*}
	for any $\sigma<\lambda_k.$
\end{corollary}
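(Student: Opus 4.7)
My plan is to split $u(s)$ into three parts using the spectral projections and treat each piece separately. Let $j^\ast$ denote the largest integer $j$ with $\lambda_j<2\lambda_k$ (this is finite because the eigenvalues accumulate only at $+\infty$), and write
\begin{align*}
    u(s) = (1-\Pi_k)u(s) + \sum_{j=k}^{j^\ast} \pi_j u(s) + \Pi_{j^\ast+1} u(s).
\end{align*}
For the middle sum, Lemma \ref{higher-order} applied to each $j\in\{k,\dots,j^\ast\}$ directly yields $P_j\in E_j$ with $\pi_j u(s) = e^{-\lambda_j s}P_j + O(e^{(\lambda_j-2\lambda_k)s})$; since the index set is finite, adding these gives $\sum_{j=k}^{j^\ast}\pi_j u(s) = \sum_j e^{-\lambda_j s}P_j + O(e^{-2\sigma s})$ for any $\sigma<\lambda_k$. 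For the ``unstable'' part, Corollary \ref{precise-rate}(b) already provides $\|(1-\Pi_k)u(s)\|_{H^r} \leq Ce^{-2\lambda_k s}$, which is absorbed into the error.

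The remaining step, which I expect to be the only nontrivial one, is to show that the high-frequency tail satisfies $\|\Pi_{j^\ast+1}u(s)\|_{H^r}\leq Ce^{-2\sigma s}$ for every $\sigma<\lambda_k$. Here I will reprise the differential-inequality argument of Lemma \ref{st-bd}, but with $k+1$ replaced by $j^\ast+1$ and, crucially, with the \emph{improved} nonlinear bound that is now available. Indeed, by the remarks following Corollary \ref{precise-rate}, we have $\|u(s)\|_{H^r}\leq Ce^{-\lambda_k s}$ (and the $H^{r+1},H^{r+2}$ norms are controlled by the $H^r$ norm up to a bounded factor by the preceding lemma on the $H^{r+1}/H^r$ ratio together with standard parabolic regularity), so the bilinear estimate Lemma \ref{bilin-bd} upgrades the bound on the nonlinear term to $\|N(u(s))\|_{H^r}\leq Ce^{-2\lambda_k s}$.

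Then, computing as in Lemma \ref{st-bd}, for any $\mu\leq \lambda_{j^\ast+1}$,
\begin{align*}
    \frac{\mathrm d}{\mathrm ds}\bigl(e^{\mu s}\|\Pi_{j^\ast+1}u(s)\|_{H^r}\bigr) \leq e^{\mu s}\|N(u(s))\|_{H^r} \leq Ce^{(\mu-2\lambda_k)s}.
\end{align*}
Fix any $\sigma<\lambda_k$ and set $\mu=2\sigma$; since $2\sigma<2\lambda_k\leq \lambda_{j^\ast+1}$, the choice is admissible and the right-hand side is integrable on $[s_0,\infty)$. Integrating yields $e^{2\sigma s}\|\Pi_{j^\ast+1}u(s)\|_{H^r}\leq C$, as required.

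Combining the three bounds gives $u(y,s) = \sum_{j\geq k,\,\lambda_j<2\lambda_k} e^{-\lambda_j s}P_j(y) + O(e^{-2\sigma s})$ in $H^r(\mathbf{S}^n)$ for every $\sigma<\lambda_k$, which is the conclusion. The principal subtlety is purely bookkeeping: one must notice that Lemma \ref{st-bd} as stated only gives $O(e^{-2\sigma s}+e^{-\lambda_{k+1}s})$ using the raw hypothesis, but by bootstrapping through Corollary \ref{precise-rate} to obtain the sharper pointwise bound on $N(u(s))$, one can push the cutoff all the way up from $\lambda_{k+1}$ to $\lambda_{j^\ast+1}\geq 2\lambda_k$ and thereby shepherd every eigenspace below the $2\lambda_k$ threshold into the explicit sum.
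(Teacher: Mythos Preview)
Your proof is correct and follows the approach implicit in the paper, which does not spell out a proof of this corollary beyond pointing to Lemma~\ref{higher-order}. Your decomposition into $(1-\Pi_k)u$, $\sum_{j=k}^{j^\ast}\pi_j u$, and $\Pi_{j^\ast+1}u$, together with the bootstrap to $\|N(u(s))\|_{H^r}\leq Ce^{-2\lambda_k s}$ and the rerun of the Lemma~\ref{st-bd} argument at level $j^\ast+1$, is exactly what is needed. One small slip: after invoking Lemma~\ref{higher-order} you write $\pi_j u(s)=e^{-\lambda_j s}P_j+O(e^{(\lambda_j-2\lambda_k)s})$, but multiplying the lemma's estimate by $e^{-\lambda_j s}$ actually gives the stronger $O(e^{-2\lambda_k s})$; either way the conclusion $O(e^{-2\sigma s})$ for $\sigma<\lambda_k$ follows.
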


\subsection{Prescribing the first-order asymptotics}

In this section we prove Theorem \ref{prescribed-lin}. Given $a\in F_k$ sufficiently small in $H^r$ for $r>n/2+1,$ we constructed in Section \ref{contraction} a unique solution $u(s;a)$ to (\ref{rMCF}) satisfying $\Pi_ku(0;a) = a$ and converging to zero with exponential rate $\lambda_k.$ In the preceding subsection we showed that 
\begin{align*}
	P(a) &= \lim_{s\to\infty} e^{\lambda_k s} u(s;a)
\end{align*}
exists and is an element of the eigenspace $E_k$ corresponding to $\lambda_k.$ Here we will study the map $a\mapsto P(a).$ We will show that the image of this map contains a small ball centered at the origin in $E_k.$ This is enough to conclude that every $P\in E_k$ is attained as the limit $e^{\lambda_k s} u(s)$ of some solution $u$ to (\ref{rMCF}), because we can always replace $u$ with $\tilde{u}(s) = u(s-s_0)$ for $s\geq s_0$ thereby scaling the limit by a factor $e^{\lambda_k s_0}.$

Actually, we do not even need to look at arbitrary $a\in F_k$ to obtain surjectivity: we may even restrict attention to $a\in E_k.$ The precise result is the following:

\medskip
\begin{proposition}
	There exists $\delta>0$ such that if $b\in E_k$ satisfies $\|b\|_r<\delta,$ there exists $a\in E_k$ with $P(a) = b.$
\end{proposition}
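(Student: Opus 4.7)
The plan is to show that $P$ is a small perturbation of the identity on $E_k$ near the origin, so that a finite-dimensional contraction argument inverts it. For $a \in E_k \subset F_k$, the construction of Section~\ref{contraction} produces a solution $u(\cdot;a)$ of (\ref{rMCF}) with $\pi_k u(0;a) = a$ and $\Pi_{k+1} u(0;a) = 0$. Projecting (\ref{rMCF}) onto $E_k$, using that $L = -\lambda_k$ on $E_k$, and integrating $\partial_s(e^{\lambda_k s}\pi_k u) = e^{\lambda_k s}\pi_k N(u)$ from $0$ to $\infty$ yields the identity
\begin{align*}
P(a) = a + R(a), \qquad R(a) := \int_0^\infty e^{\lambda_k \tau} \pi_k N(u(\tau;a))\,\mathrm d\tau.
\end{align*}
It therefore suffices to establish the quadratic bound $\|R(a)\|_{H^r} \leq C\|a\|_{H^r}^2$ and the Lipschitz bound $\|R(a) - R(a')\|_{H^r} \leq C(\|a\|_{H^r} + \|a'\|_{H^r})\|a - a'\|_{H^r}$ for $a,a' \in E_k$ sufficiently small.

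Both bounds follow from Lemma~\ref{bilin-bd} applied under the integral, provided that $\|u(s;a)\|_{H^r}$ and $\|u(s;a)\|_{H^{r+1}}$ decay pointwise like $\|a\|_{H^r}\,e^{-\sigma s}$ for some $\sigma > \lambda_k/2$. The bound $\|u(\cdot;a)\|_{r,\sigma} \leq C\|a\|_{H^r}$ coming from Theorem~\ref{contraction-thm}, valid for any $\sigma \in (\lambda_{k-1},\lambda_k)$, directly provides the pointwise $H^r$ bound; since equation (\ref{rMCF}) is quasilinear parabolic, standard parabolic smoothing applied on unit time intervals promotes this to a pointwise $H^{r+1}$ bound with the same rate and the same linear dependence on $\|a\|_{H^r}$ for $s \geq 1$, while the short time $[0,1]$ is handled directly using the $L^2$-in-time $H^{r+1}$ control already built into $\|\cdot\|_{r,\sigma}$. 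Choosing $\sigma$ in the non-empty interval $(\max(\lambda_{k-1},\lambda_k/2),\lambda_k)$ makes $\int_0^\infty e^{(\lambda_k - 2\sigma)\tau}\,\mathrm d\tau$ finite, which together with the trivial bound $\|\pi_k f\|_{H^r} \leq C\|f\|_{H^{r-1}}$ (since the range of $\pi_k$ is finite-dimensional) delivers $\|R(a)\|_{H^r} \leq C\|a\|_{H^r}^2$. The Lipschitz estimate is obtained identically, using in addition the fact that $a \mapsto u(\cdot;a)$ is Lipschitz in the $\|\cdot\|_{r,\sigma}$ norm, which is immediate from Theorem~\ref{contraction-thm} and the simple linear dependence of $T(\cdot\,;u_0)$ on the parameter $u_0$.

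With these estimates, the proposition follows from a Banach fixed-point argument in the finite-dimensional space $E_k$: for $b \in E_k$ with $\|b\|_{H^r} \leq \delta$ sufficiently small, the map $\Phi_b(a) := b - R(a)$ sends the ball $\{a \in E_k \colon \|a\|_{H^r} \leq 2\delta\}$ into itself, since $\|\Phi_b(a)\|_{H^r} \leq \delta + 4C\delta^2 \leq 2\delta$, and it is a contraction there with Lipschitz constant at most $4C\delta < 1$; its unique fixed point $a^*$ then satisfies $P(a^*) = a^* + R(a^*) = b$. The main obstacle is the parabolic-smoothing step, since Theorem~\ref{contraction-thm} only yields $L^2$-in-time control of $\|u\|_{H^{r+1}}$ and one must upgrade this to pointwise $H^{r+1}$ decay with constant linear in $\|a\|_{H^r}$; an alternative to invoking parabolic regularity would be to re-examine the bootstrap in Section~\ref{asymptotics} while tracking the dependence of each constant on $\|a\|_{H^r}$, thereby strengthening Corollary~\ref{precise-rate} to give $\|u(s;a)\|_{H^r} \leq C\|a\|_{H^r}e^{-\lambda_k s}$ directly.
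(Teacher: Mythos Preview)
Your argument is correct and structurally identical to the paper's: write $P(a)=a+R(a)$, show $R$ is quadratically small, and close with a fixed-point argument on the finite-dimensional space $E_k$. The difference lies in how the quadratic bound on $R$ is obtained and in which fixed-point theorem is invoked.

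For the quadratic bound, you need pointwise $H^{r+1}$ decay of $u(s;a)$, and you get it by appealing to parabolic smoothing on unit time intervals. The paper sidesteps this entirely: it reruns the contraction argument of Section~\ref{contraction} at regularity $r+2$ rather than $r$, so that the $\sup$ part of the norm $\|\cdot\|_{r+2,\sigma}$ already delivers pointwise $H^{r+2}$ decay with constant proportional to $\|a\|_{H^{r+2}}$, and then exploits the finite-dimensionality of $E_k$ to replace $\|a\|_{H^{r+2}}$ by $\|a\|_{H^r}$ at the very end. This is cleaner and avoids the smoothing step you flagged as the main obstacle (though the paper itself invokes the same ``standard parabolic estimates'' elsewhere, so your appeal is in keeping with the ambient level of rigor).

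For the fixed-point step, you use the Banach contraction principle, which requires your Lipschitz estimate on $R$ but yields uniqueness of $a$ for free. The paper uses only the Brouwer fixed-point theorem (continuous self-map of a closed ball), needing nothing beyond continuity of $a\mapsto P(a)$; the Remark after the proposition notes that uniqueness (indeed, a local homeomorphism) follows from a slightly more careful version of the same argument, which is essentially what your Lipschitz bound provides.
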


\emph{Remark.} A slightly more careful argument along the lines of the below proof shows that $P$ actually maps a small neighborhood of the origin in $E_k$ homeomorphically onto another small neighborhood of the origin.

\begin{proof}
Let us first recall the norm $\|\cdot\|_{r,\sigma}$ from Section \ref{contraction}:
\begin{align*}
	\|v\|_{r,\sigma} &= \left(\int_0^\infty \|v(s)\|_{r+1}^2\,\mathrm ds\right)^{1/2} + \sup_{s\geq 0} e^{\sigma s}\|v(s)\|_r.
\end{align*}
It follows from Theorem \ref{contraction-thm} that if $\sigma<\lambda_k$ and $a\in F_k$ is sufficiently small in $H^r,$ then
\begin{align*}
	\|u(s;a) - e^{Ls}a\|_{r,\sigma} \leq C\|u(\cdot;a)\|_{r,\sigma}^2
\end{align*}
for some constant $C>0$ depending only on $\|a\|_{H^r}.$ By making $a$ smaller if necessary, we may moreover assume that $\|u(\cdot;a)\|_{r,\sigma}<1/(2C)$ so that we get the bound
\begin{align}
	\|u\|_{r,\sigma} \leq 2\|e^{Ls}a\|_{r,\sigma} \leq 4\|a\|_{H^r}. \label{pr.init-bd}
\end{align}
The last inequality is just an $H^r$ estimate for the homogeneous linear equation. Thus we can bound $\|u\|_{r,\sigma}$ by a constant times $\|a\|_{H^r}$ for any $\sigma<\lambda_k,$ provided that $\|a\|_{H^r}$ is small enough.

We now make use of the representation
\begin{align*}
	e^{\lambda_k s} u(s;a) & = e^{(\lambda_k+L)s}a  \\
	&+ \int_{0}^s e^{(\lambda_k + L)(s-t)} e^{\lambda_kt} \Pi_{k} N(u(t;a))\,\mathrm dt - \int_s^\infty e^{(\lambda_k + L)(s-t)}e^{\lambda_k t}(1-\Pi_{k}) N(u(t;a))\,\mathrm dt,
\end{align*}
which is valid for $u$ because  $e^{\lambda_k s}u(s;a)$ is bounded. By taking the $H^r$ norm of both sides and applying the triangle inequality we deduce
\begin{align*}
	\left\|e^{\lambda_k s} u(s) -e^{(\lambda_k + L)s} a\right\|_{H^r} &\leq \int_{0}^\infty e^{\lambda_kt} \|N(u(t))\|_{H^r}\,\mathrm dt \leq C\int_{0}^\infty e^{\lambda_k t} \|u(t)\|_{H^{r+1}}\|u(t)\|_{H^{r+2}} \,\mathrm dt.
\end{align*}
where in the last inequality we've used the nonlinear bound $\|N(u)\|_{H^r}\lesssim \|u\|_{H^{r+1}}\|u\|_{H^{r+2}}$ from Lemma \ref{bilin-bd}. Now, the right side can be bounded by $\|u\|_{r+2,3\lambda_k/4}^2,$ for example, as follows:
\begin{align*}
	\int_{0}^\infty e^{\lambda_k t}\|u(t)\|_{H^{r+1}} \|u(t)\|_{H^{r+2}} \,\mathrm dt &\leq\int_{0}^\infty\left( e^{3\lambda_k t/4}\|u(t)\|_{H^{r+2}}\right)^2e^{-\lambda_k/2} \,\mathrm dt \leq \frac{2}{\lambda_k}\left(\sup_{t\geq 0} e^{3\lambda_k/4}\|u(t)\|_{H^{r+2}}\right)^2.
\end{align*}

If we now assume that $a$ is sufficiently small in $H^{r+2}$ rather than in $H^r$ and employ the bound (\ref{pr.init-bd}) (with $r+2$ instead of $r$) we obtain 
\begin{align*}
	\left\|e^{\lambda_k s} u(s) -e^{(\lambda_k + L)s} a\right\|_{H^r} \leq 4 \|a\|_{H^{r+2}}^2.
\end{align*}
On the left side we let $s\to\infty.$ If $\pi_ka$ is the projection of $a$ onto the eigenspace $E_k,$ the result is
\begin{align*}
	\frac{1}{\lambda_k^2} \|P(a) - \pi_k a\|_{H^{r+2}} = \| P(a) - \pi_ka\|_{H^r} \leq 4\|a\|_{H^{r+2}}^2.
\end{align*}
The first inequality is just the definition of $H^r$ norm on the eigenspace $E_k.$

To finish the argument, we restrict attention to $a\in E_k.$ For such $a$ we have $\pi_k a = a$ and the foregoing estimate reduces to $\|P(a) - a\| \leq C\|a\|^2$ for all sufficiently small $a$ in $E_k$ (the norm is unimportant because $E_k$ is finite-dimensional). This is enough to prove that the image of $P$ contains a small ball in $E_k$ centered at the origin. 

Indeed, we run a standard contraction argument as in one direction of the proof of the inverse function theorem: if $b\in E_k,$ we want to solve the fixed point equation $a = b - (P(a) - a).$ If $\delta <1/(2C)$ and $0<\|b\|\leq \delta - C\delta^2,$ then the map $F$ defined by $F(a) = b-(P(a) - a)$ sends the closed ball $\|a\|\leq\delta$ to itself. Indeed if $\|a\|\leq \delta$ then
\begin{align*}
	\|F(a)\| = \|b- (P(a)-a)\| \leq \|b\| + \|P(a) - a\| \leq \delta - C\delta^2 + C\delta^2 = \delta.
\end{align*}
On the other hand, $F$ depends continuously on $a$ (this follows from the proof of Theorem \ref{contraction-thm}) and so, being a continuous mapping of a closed ball into itself, it must have a fixed point $a,$ that is, a solution to $P(a) = b.$
\end{proof}

\clearpage
\bibliography{/Users/nstrehlke/Documents/MathBibliography}
\bibliographystyle{amsalpha}

\end{document}